\newtheorem{thm}{Theorem}[section]
\newtheorem{cor}[thm]{Corollary}
\newtheorem{conj}[thm]{Conjecture}
\theoremstyle{definition}
\newtheorem{rem}[thm]{Remark}
\newcommand{\C}{{\mathbb C}}
\newcommand\R{{\mathbb R}}
\newcommand\Z{{\mathbb Z}}
\renewcommand\H{{\mathcal H}}
\newcommand\Q{{\mathcal Q}}
\newcommand\cL{{\mathcal L}}
\newcommand\E{{\mathbb E}}
\renewcommand\P{{\mathbb P}}
\def\ed{\stackrel{{\rm d}}{=}}
\begin{document}

\allowdisplaybreaks

\newcommand{\arXivNumber}{2601.06893}

\renewcommand{\PaperNumber}{069}

\FirstPageHeading

\ShortArticleName{Discrete Whittaker Processes and Non-Intersecting Brownian Bridges}

\ArticleName{Absorption Times for Discrete Whittaker Processes\\ and Non-Intersecting Brownian Bridges}

\Author{Neil O'CONNELL}

\AuthorNameForHeading{N.~O'Connell}

\Address{School of Mathematics and Statistics, University College Dublin, Dublin 4, Ireland}
\Email{\mail{neil.oconnell@ucd.ie}}

\ArticleDates{Received February 20, 2026, in final form July 09, 2026; Published online July 20, 2026}

\Abstract{We present evidence for a~conjectural relationship between absorption times for discrete Whittaker processes and
maximal heights of non-intersecting Brownian bridges.}

\Keywords{Toda chain; Whittaker functions}

\Classification{37K10; 60K35; 60B20; 82C23}

\section{Introduction}

It is well known (see, for example,~\cite{bpy} for a~survey)
that twice the square of the maximum of a~reflected Brownian bridge, starting and ending at zero,
has the same law as the random variable%
\begin{equation}\label{S} S=\sum_{n=1}^\infty \frac{e_n}{n^2},\end{equation}
where $e_1, e_2, \dots$ is a~sequence of independent standard exponential random variables, and that twice the square of the
maximum of a~standard Brownian excursion has the same law as $S+S'$, where $S'$ is an independent copy of $S$.
In this paper, we present evidence for a~conjectural generalisation of these identities in law, which relates maximal heights of non-intersecting
reflected Brownian bridges and non-intersecting Brownian excursions to absorption times for the discrete Whittaker processes introduced in~\cite{noc23}.
The present study of the latter is motivated by an attempt to understand the large scale behaviour of these processes,
in particular the question of whether they belong to the KPZ universality class~\cite{c}, which we now conjecture to be the case
based on this apparent connection. There is an extensive literature on maximal heights of non-intersecting Brownian bridges,
their asymptotics, and connections to 2D Yang--Mills theory on the sphere~\cite{f,fms,kat,lie,smcrf}.

Let $\Pi$ be the set of infinite arrays
$(\pi_{ij})_{i,j=1}^\infty$ of nonnegative integers
satisfying
\[ \pi_{ij} \ge \max\{ \pi_{i,j-1},\pi_{i-1,j}\}\]
for all $i,j\ge 1$ with the convention $\pi_{i0}=\pi_{0j}=0$.
We consider a~continuous time Markov chain on $\Pi$ which evolves
as follows: for each pair of indices $(i,j)$, subtract one from $\pi_{ij}$ at rate~${ (\pi_{ij}-\pi_{i,j-1})(\pi_{ij}-\pi_{i-1,j})}$.
A~simulation of this process, started at $\pi_{ij}=70$ for all $i,j\ge 1$ and
run until the first time that $\pi_{1,50}=0$, is shown in its final state in Figure~\ref{figure1}.

\begin{figure}\centering
\includegraphics[scale=0.9]{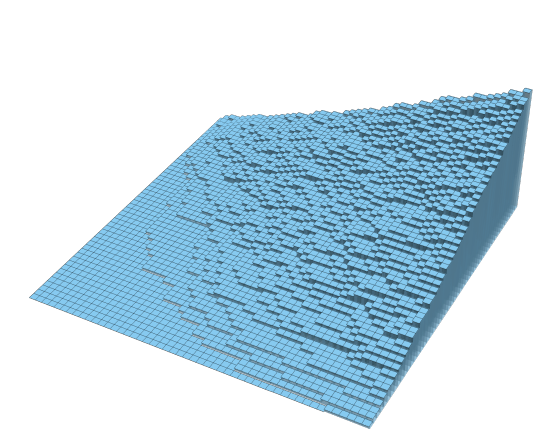}

\caption{Simulation of the Markov chain on $\Pi$, started with $\pi_{ij}=70$ for all $i,j\ge 1$
and run until the first time that $\pi_{1,50}=0$. The height of this surface over the box with
coordinates $(i,j)$ is the value of~$\pi_{ij}$ at this stopping time, shown here for $1\le i,j\le 50$.}\label{figure1}
\end{figure}

We would like to understand the evolution of the zero set $ \{(i,j)\mid \pi_{ij}=0\}$.
This corresponds to the flat region on the left of
Figure~\ref{figure1}.
To this end, let $T_r$ be the first time that $\max_{i=1,\dots,r}\pi_{i,r-i+1}=0$.
Note that the projection of the chain onto the values of $\pi_{ij}$ for~${2\le i+j\le r+1}$
is autonomously Markov and that $T_r$ is the absorption time for this projection.

In~\cite{noc23}, it was shown that the above Markov chain on $\Pi$ is related to the quantum Toda chain
and has a~unique entrance law starting from the array with $\pi_{ij}=+\infty$ for all $i,j\ge 1$.
The latter statement means that there is a~unique family of probability measures $\mu_t$, $t>0$ such that
(i) for all $t,s>0$, if we start the chain with the random initial condition $\mu_t$, then its law at time~$s$ will be $\mu_{t+s}$ and (ii) the corresponding realisation of the chain, which has the law $\mu_t$
at each time $t>0$, will satisfy $\pi_{ij}(t)\to +\infty$ in probability as $t\to 0$ for all $i,j\ge 1$.
In the following, we will assume the Markov chain is started at $+\infty$ and denote its law by~$\P$.

Let us first note that $\P(T_r<\infty)=1$. This can be seen by induction, as follows.
Let $\tau_{ij}$ be the first time that $\pi_{ij}=0$ and note that $T_r=\max_{i=1,\dots,r}\tau_{i,r-i+1}$,
so it suffices to show that~${\P(\tau_{ij}<\infty)=1}$ for all $i,j\ge 1$.
From the definition, the corner value $\pi_{11}$ evolves as a~pure death process
on the nonnegative integers with quadratic rates: it jumps from $n$ to $n-1$ at rate~$n^2$.
The law of $\tau_{11}$ is therefore the same as that of $S$ defined by~\eqref{S}.
Note that, by monotone convergence, $ES=\zeta(2)<\infty$, which implies that $S$ (and hence $\tau_{11}$)
is almost surely finite. Now consider the time $\tau_{ij}$ and note the inequality
\[\tau_{ij}\ge \sigma_{ij}:=\max\{ \tau_{i,j-1},\tau_{i-1,j}\}\]
with the convention $\tau_{i0}=\tau_{0j}=0$.
By the induction hypothesis, $\sigma_{ij}<\infty$ almost surely.
From the time $\sigma_{ij}$ onwards, the value of $\pi_{ij}$ evolves as a~pure death process, jumping from
$n$ to $n-1$ at rate $n^2$; the additional time $\tau_{ij}-\sigma_{ij}$ taken for $\pi_{ij}$ to reach zero is therefore stochastically bounded above by the random variable $S$, and hence $\tau_{ij}<\infty$ almost surely, as required.

The Hamiltonian of the $(r+1)$-particle quantum Toda chain is given by
\[\H^r = -\frac12 \sum_{i=1}^{r+1} \frac{\partial^2}{\partial x_i^2} + \sum_{i=1}^{r}{\rm e}^{x_i-x_{i+1}}.\]
The corresponding eigenvalue equation has series solutions known as
fundamental Whittaker functions~\cite{h,is}.
In particular, if we consider the series
\[
\phi_r(x)=\sum_{n\in\Z_+^r} a_r(n) \prod_{i=1}^r {\rm e}^{n_i(x_i-x_{i+1})}, \]
where $\Z_+$ denotes the nonnegative integers,
then $\H^r\phi_r=0$ provided the coefficients $a_r(n)$ satisfy the recursion
\begin{equation}\label{rec-ar}
\left(\sum_{i=1}^r n_i^2 - \sum_{i=1}^{r-1} n_i n_{i+1}\right) a_r(n) = \sum_{i=1}^r a_r(n-e_i),
\end{equation}
where $e_1,\dots,e_r$ are the standard basis vectors in $\Z^r$ and
with the convention $a_r(n)=0$ for~${n\notin\Z_+^r}$.
If we set $a_r(0)=1$, then this recursion has a~unique solution.
In~\cite{noc23}, it is shown that, for the Markov chain on $\Pi$ started from $+\infty$,
the values of $(\pi_{1,r},\pi_{2,r-1},\dots,\pi_{r,1})$ evolve as a~Markov chain on $\Z_+^r$,
which jumps from $n$ to $n-e_i$ at rate $a_r(n-e_i)/a_r(n)$, for each~${i=1,\dots,r}$.
As $T_r$ is also the absorption time for this projection, we can therefore study its distribution
using the theory of the quantum Toda chain and its eigenfunctions.

The eigenfunctions we need to consider are for a~particular discrete quantisation of the Toda chain
(corresponding to the recursion~\eqref{rec-ar}).
We consider families of left eigenfunctions, defined in terms of Mellin transforms of class one
Whittaker functions, and right eigenfunctions, defined in terms of coefficients
of fundamental Whittaker functions. These are discussed in detail in Section~\ref{ef}.
With these in hand, our approach is to formulate spectral expansions for the transition probabilities
of the chain and hence also the distribution function $\P(T_r\le t)$.
This is technically quite difficult however, as the series
in question are quite singular and difficult to justify rigorously. On the other hand,
the eigenfunctions can be factorised for special values of their arguments, leading to explicit
formulas which can be used to provide further evidence for the correctness of the expansions.
For the left eigenfunctions, these factorisations follow from a~result of Stade~\cite{stade-ajm} on Mellin
transforms of class one Whittaker functions and for the right eigenfunctions they follow from
an analogous result for certain coefficients of fundamental Whittaker functions
obtained in the present paper (see Theorem~\ref{ff}).

Assuming the expansions are correct, they reveal a~remarkable connection with
maximal heights of non-intersecting reflected Brownian bridges and non-intersecting
Brownian excursions. Recall that a~Brownian bridge, starting and ending at zero, is a~standard
one-dimensional Brownian motion, started at zero, and conditioned to be at zero at time 1.
A~reflected Brownian bridge, starting and ending at zero, is simply the absolute value of a~Brownian bridge, starting and ending at zero. A~(standard) Brownian excursion is a~Brownian bridge, starting and ending at zero, conditioned to stay positive in between.
A~collection of non-intersecting Brownian bridges (resp.\ reflected Brownian bridges or Brownian excursions)
is a~collection of independent Brownian bridges (resp.\ reflected Brownian bridges or Brownian excursions)
conditioned not to intersect.

As remarked above, \smash{$T_1\ed S$} where $S$ is defined by~\eqref{S}
and $\ed$ denotes equality in distribution.
In the case $r=2$, it was shown in~\cite[Proposition 4.4]{noc23} that
\smash{$T_2\ed S+S'$}, where $S'$ is an independent copy of $S$.
From these representations, one can infer, for example, the distribution functions\looseness=-1
\begin{equation}\label{df12} \P(T_1\le t) = \sum_{k\in\Z} (-1)^k {\rm e}^{-k^2 t}, \qquad
\P(T_2\le t) = \sum_{k\in\Z} \big(1-2k^2t\big) {\rm e}^{-k^2 t}.\end{equation}
The random variables $S$ and $S+S'$ have many interesting properties and interpretations~\cite{bpy}.
In particular, $S$ has the same law as twice the square of the maximum
of a~reflected Brownian bridge, starting and ending at zero, and $S+S'$ has the same law as twice
the square of the maximum of a~standard Brownian excursion.

Using our conjectured formulas, we carry out explicit computations for small values of $r$.
For~${r=1,2}$, we recover the formulas~\eqref{df12}.
For $r=3$, we obtain
\[
\P(T_3\le t) = \sum_{k,l\in\Z} P_{k,l}(t) {\rm e}^{-(k^2+l^2)t},
\]
where
\[P_{k,l}(t) = (-1)^{k+l} \big[1 - 2\big(k^2+l^2\big) t + \big(k^2-l^2\big)^2 t^2\big].\]
Using a~result from~\cite{lie},
we find that this agrees with the distribution function of
twice the square of the maximal height of a~pair of non-intersecting reflected Brownian bridges,
each starting and ending at zero.
The result we use from~\cite{lie} is the formula~(1.18) in that paper and to write it in the form required here we follow a~procedure similar to that given in~\cite{kat} for the absorbing case and then symmetrise the coefficients in the resulting
series~-- see Section~\ref{nibb} for more details.

For $r=4$, we obtain (partially numerically)
\[
\P(T_4\le t) = \sum_{k,l\in\Z} Q_{k,l}(t) {\rm e}^{-(k^2+l^2)t},
\]
where
\[
Q_{k,l}(t) = 1 - 4\big(k^2+l^2\big) t + 3\big(k^2+l^2\big)^2t^2
 -\frac23\big(k^2+l^2\big)^3t^3+\frac43 k^2l^2\big(k^2-l^2\big)^2 t^4.\]
This agrees with the distribution function of twice
the square of the maximal height of a~pair of non-intersecting Brownian excursions,
as computed in~\cite{f,kat,smcrf}.

This leads us to make the following conjecture. For $N\ge 1$, let $M_N$ (resp.\ $H_N$)
be the maximal height of $N$ non-intersecting reflected Brownian bridges starting and ending
at zero (resp.\ non-intersecting Brownian excursions).

\begin{conj} For each $N\ge 1$, \smash{$T_{2N-1}\ed 2(M_N)^2$}
and \smash{$T_{2N}\ed 2(H_N)^2$}.
\end{conj}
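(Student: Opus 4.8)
The plan is spectral: for each fixed $r$, prove a Karlin--McGregor type expansion for the survival function of the absorbed anti-diagonal chain $(\pi_{1,r},\dots,\pi_{r,1})$ on $\Z_+^r$,
\be
\P(T_r>t)=\sum_{k\in\Z^{\lceil r/2\rceil}} c^{(r)}_k(t)\,e^{-|k|^2t},
\ee
with explicit polynomial coefficients $c^{(r)}_k$ (here $|k|^2=k_1^2+\dots+k_{\lceil r/2\rceil}^2$); prove the analogous expansion for $\P\big(2(M_N)^2>t\big)$ when $r=2N$ and for $\P\big(2(H_N)^2>t\big)$ when $r=2N+1$; and then identify the two expansions term by term. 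Both the absorbed discrete Whittaker chain and non-intersecting reflected bridges / excursions are, after the relevant change of variables, governed by a Toda-type operator --- discrete in the first case, the Dirichlet Laplacian on a Weyl-alcove-like region in the second --- and the reduction of spectral rank from $r$ to $\lceil r/2\rceil$, which on the Brownian side stems from the folding symmetry of the alcoves on which the two ensembles live, is what should collapse an a priori $r$-fold spectral sum to the $\lceil r/2\rceil$-fold sum above. Granting the two expansions, the conjecture becomes the statement that $c^{(r)}_k$ coincides with the corresponding Brownian-side coefficient.

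On the discrete side I would first complete the eigenfunction analysis begun in \S2: prove biorthogonality of the left eigenfunctions (Mellin transforms of class one Whittaker functions) and the right eigenfunctions (built from the fundamental Whittaker coefficients $a_r(n)$), establish a completeness / Plancherel statement for the associated transform, and use it to write the transition kernel of the killed chain, and hence $\P(T_r>t)$, as a contour integral of $e^{-\mu t}$ against a product of a left and a right eigenfunction. Shifting contours picks up residues at the poles of the Mellin kernel, and these residues occur precisely at the arguments at which Stade's factorization of the Mellin transform~\cite{stade-ajm} and the factorization of Theorem~\ref{ff} apply; this is what collapses the integral to the discrete sum and makes $c^{(r)}_k(t)$ explicit. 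The main obstacle lives here: as flagged in the introduction the series are highly singular, so one needs bounds on the Whittaker kernels --- uniform along vertical lines in $\mu$ and with controlled growth in the discrete variable --- sharp enough to justify the contour shifts, the residue evaluation, and the interchange of summation with integration. Producing these estimates, or else proving the expansion inductively in $r$ by peeling off one anti-diagonal coordinate at a time via the intertwinings between the rank-$r$ and rank-$(r-1)$ Whittaker chains from~\cite{noc23}, is where I expect the bulk of the work to lie.

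On the Brownian side I would take as input the known expressions for the laws of $M_N$ and $H_N$ from~\cite{lie,f,kat,smcrf,fms}, which give the corresponding distribution functions in terms of Jacobi theta functions and their derivatives --- equivalently, via the heat kernel on an alcove obtained by the method of images, or via the partition function of 2D Yang--Mills on the sphere. Expanding the theta functions and regrouping by the value of $|k|^2$ recasts $\P\big(2(M_N)^2\le t\big)$ and $\P\big(2(H_N)^2\le t\big)$ in exactly the form $\sum_k(\text{polynomial})\,e^{-|k|^2t}$, and one reads off the Brownian coefficients. The conjecture then reduces to a family of polynomial identities matching $c^{(r)}_k$ with these coefficients. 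Since the Brownian eigenfunctions are, up to elementary prefactors, Weyl-character determinants of sines for the relevant affine Weyl group, whereas the discrete coefficients are controlled by the factorised values of the Whittaker kernels, this last identity is essentially the assertion that the discrete Toda eigenfunctions degenerate, at the arguments forced by the residues, to those alcove eigenfunctions; it is already verified for $r\le4$ by the explicit computations in this paper, and the remaining task --- beyond the analytic difficulty above --- is to prove it for all $N$, presumably by feeding Theorem~\ref{ff} and Stade's theorem into a Lindstr\"om--Gessel--Viennot / Karlin--McGregor reorganisation of the coefficients.
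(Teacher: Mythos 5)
The statement you are addressing is a \emph{conjecture}: the paper does not prove it, and explicitly says that the Karlin--McGregor type expansions on which everything rests are ``highly singular and difficult to justify rigorously.'' What you have written is the same programme the paper itself lays out --- left eigenfunctions from Mellin transforms of class one Whittaker functions, right eigenfunctions from the shifted fundamental Whittaker coefficients, collapse of the spectral expansion via the factorisations of Theorem~\ref{stade} and Theorem~\ref{ff}, and term-by-term matching against the discrete Gaussian ensemble formulas of Section~\ref{nibb} --- restated as a plan. Every step that would turn this plan into a proof is deferred. Concretely: (a) biorthogonality and completeness of the $\beta_{r,\nu}$, $\beta^*_{r,\nu}$ pair is only available for $r=1$ (equations \eqref{or} and \eqref{cp}); for $r\ge 3$ neither the expansion \eqref{pf} nor the boundary condition $p_0(n'+n,n')=\delta_{n0}$ is established, and you supply no estimates that would justify the contour shifts and interchanges you invoke. (b) Even granting \eqref{atr}, the identities \eqref{cp-P3} and \eqref{cp-Q} --- equivalently, the vanishing of $c_{\k'}(t)$ for all but the special equivalence classes, which is precisely your claimed collapse from an $r$-fold to a smaller spectral sum --- are verified in the paper only by exhaustive case analysis at $r=3$ and partially numerically at $r=4$; your proposed Lindstr\"om--Gessel--Viennot reorganisation for general $N$ is not carried out and no mechanism for it is identified. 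So the proposal does not prove the conjecture; it reproduces the paper's heuristic with the same gaps.

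One further concrete issue: your bookkeeping follows the conjecture as literally stated, but the paper's own computations give $T_1\ed 2(M_1)^2$, $T_2\ed 2(H_1)^2$, $T_3\ed 2(M_2)^2$, $T_4\ed 2(H_2)^2$, i.e.\ $T_{2N-1}\ed 2(M_N)^2$ and $T_{2N}\ed 2(H_N)^2$ (consistent with the ``Higher rank'' subsection, where $r=2N-1$ is paired with $P_k$ and $r=2N$ with $Q_k$). With your indexing, the spectral sum for $r=2N+1$ would be over $\Z^{N+1}$ while the expansion of $\P(2(H_N)^2\le t)$ is over $\Z^N$, so the two sides of your proposed term-by-term identification do not even have matching index sets; you would need to adopt the corrected correspondence before the matching step can make sense.
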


By the asymptotic results obtained in~\cite{lie} for the distributions of $M_N$
and $H_N$, this conjecture, if true, would imply the following:

\begin{conj}\label{conjecture1.2} As $r\to\infty$,
\[\P\big(T_r-2r\le xr^{1/3}\big) \to F_1 (x),
\]
where $F_1 (x)$ denotes the Tracy--Widom GOE distribution.
\end{conj}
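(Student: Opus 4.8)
The plan is to obtain this as a direct consequence of the preceding conjecture together with the known large-$N$ edge asymptotics of $M_N$ and $H_N$ established by Liechty~\cite{lie} (building on~\cite{f,fms,smcrf}): once the identities $T_{2N}\ed 2M_N^2$ and $T_{2N+1}\ed 2H_N^2$ are granted, what remains is a purely deterministic change of variables, so the statement reduces to quoting those asymptotics in the form we need and checking that the normalising constants match up. In that form, Liechty's results say that, as $N\to\infty$, both $N^{1/6}\bigl(M_N-\sqrt{2N}\bigr)$ and $N^{1/6}\bigl(H_N-\sqrt{2N}\bigr)$ converge in distribution to $c^{-1}\chi$, where $c>0$ is an explicit constant and $\chi$ has distribution function $F_1$; in particular the centering is $\sqrt{2N}$ and the fluctuation scale is $N^{-1/6}$.

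First I would treat even $r=2N$. Since $h\mapsto 2h^2$ is strictly increasing on $[0,\infty)$ and $M_N\ge0$, we have the exact identity $\P(T_{2N}\le t)=\P\bigl(M_N\le\sqrt{t/2}\,\bigr)$ for $t\ge0$. Substituting $t=2r+xr^{1/3}=4N+x(2N)^{1/3}$ and expanding the square root,
$$\sqrt{t/2}=\sqrt{2N+\tfrac12 x\,(2N)^{1/3}}=\sqrt{2N}+2^{-13/6}\,x\,N^{-1/6}+o(N^{-1/6}),$$
so that, provided $c=2^{13/6}$,
$$\P\bigl(T_{2N}\le 2r+xr^{1/3}\bigr)=\P\Bigl(N^{1/6}\bigl(M_N-\sqrt{2N}\bigr)\le c^{-1}x+o(1)\Bigr)\longrightarrow F_1(x),$$
the last step using continuity of $F_1$, so that every real $x$ is a continuity point of the limit. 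For odd $r=2N+1$ the computation is identical with $H_N$ in place of $M_N$: replacing $2(2N+1)$ by $4N$ and $(2N+1)^{1/3}$ by $(2N)^{1/3}$ changes $\sqrt{t/2}$ by only $O(N^{-1/2})=o(N^{-1/6})$, and Liechty's theorem supplies the same GOE limit for $N^{1/6}\bigl(H_N-\sqrt{2N}\bigr)$. As the even and odd subsequences then share the limit $F_1(x)$, the limit along all integers $r$ follows.

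The principal obstacle is the preceding conjecture itself, for which no approach beyond the small-$r$ computations and the conjectural Karlin--McGregor expansions of \S2 is currently available; everything above is conditional on it (for odd $r$ only on $T_{2N+1}\ed 2H_N^2$, for even $r$ only on $T_{2N}\ed 2M_N^2$). Granting it, the single point demanding genuine care is the matching of normalising constants: the statement fixes the centering at $2r$, the scale at $r^{1/3}$, and the coefficient of $xr^{1/3}$ at exactly $1$, and this is consistent with Liechty's results only if his edge constant for $M_N$ — and, independently, for $H_N$ — equals precisely the value forced by the Taylor expansion (namely $c=2^{13/6}$ above), and only if the centering in his theorem is genuinely $\sqrt{2N}$ with no intervening deterministic term of order strictly between $N^{-1/6}$ and $N^{1/2}$ (such as an $N^{0}$ or $N^{1/6}$ term); any such term would make $T_r-2r$ fluctuate on a scale larger than $r^{1/3}$ and the stated limit false. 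Both points should follow from~\cite{f,fms,smcrf,lie} once the constants are transcribed carefully, and this transcription, together with the invocation of the preceding conjecture, is the whole of the proof.
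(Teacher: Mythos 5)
Your proposal is exactly the paper's (one-sentence) justification: the statement is presented as a corollary of the preceding conjecture combined with the edge asymptotics of $M_N$ and $H_N$ from~\cite{lie}, and you have simply spelled out the deterministic Taylor expansion $\sqrt{t/2}=\sqrt{2N}+2^{-13/6}xN^{-1/6}+o(N^{-1/6})$ that reduces one to the other, correctly flagging that the whole thing remains conditional and that only the transcription of Liechty's constants needs checking. This matches the paper's intent, which offers no further detail.
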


Note that these conjectures may be interpreted as statements about the law of the interface between $0$'s and $1$'s, since the event $\{T_r\le t\}$ may be interpreted as saying that this interface has completely passed the diagonal $(1,r),(2,r-1),\dots, (r,1)$ by time $t$. Equivalently, if we define~${L_t=\max\{r \mid \pi_{ij}(t)=0}$ for 2$\le i+j\le r+1\}$ then $\{T_r\le t\}=\{L_t\ge r\}$. So, for example, Conjecture~\ref{conjecture1.2} is equivalent to the statement that, as $t\to\infty$,
\[\P\big(t/2-L_t \le 2^{-4/3} xt^{1/3}\big) \to F_1 (x).\]
As such, this strongly suggests that the interface itself, appropriately rotated, centered and rescaled, should converge in distribution to the Airy${}_2$ process.

Finally, we remark that, if we start the chain with $\pi_{ij}=1$ for all $i,j\ge 1$ then it is equivalent to the corner growth process and the absorption time $T_r$ may be interpreted as the point-to-line directed last passage time from $(1,1)$ to the line $(1,r),(2,r-1),\dots, (r,1)$ in a~model with standard exponential weights. It is known via (a continuous version of) the RSK correspondence~\cite{br,fw,f2010} that this random variable has the same law as the largest eigenvalue in a~certain Laguerre orthogonal ensemble and in~\cite{nr17} it was shown to have the same law as twice the square of the maximal height of $r$ non-intersecting Brownian bridges.

The outline of the paper is as follows. In the next section, we recall some
of the relevant background on the quantum Toda chain and its eigenfunctions,
which are known as Whittaker functions. We discuss in particular the discrete
quantisation of interest, and its eigenfunctions. The main new results in this
section are a~joint eigenfunction property of the right eigenfunctions (see Theorem~\ref{jef-f})
and a~factorisation for special values of their arguments (see Theorem~\ref{ff}).
As an aside, in Section~\ref{bs} we explain how this factorisation
yields a~series of binomial sum identities which generalise the classical identities of
Vandermonde and Chu. In Section~\ref{nibb}, we summarise
the relevant facts about maximal heights of non-intersecting Brownian bridges.
In Section~\ref{tpat}, we formulate and discuss our proposed
expansions for the discrete Whittaker process transition probabilities and absorption time distribution
functions, with some explicit computations for~${r\le 4}$ in support of the above conjectures.

\section{Eigenfunctions and their factorisations}\label{ef}

\subsection{The Toda chain}

Let
\begin{equation}\label{lax}
L=\begin{bmatrix} p_1 & -1 & 0 & \cdots & 0\\
q_1 & p_2 & -1 & \cdots & 0\\
0 & q_2 & \smash{\ddots} & \smash{\ddots} & \smash{\vdots}\\
\smash{\vdots}&&&p_r&-1\\
0&\dots&0&q_r &p_{r+1} \end{bmatrix},
\end{equation}
and define polynomials $\eta_{r,l}(p,q),\ 0\le l\le r+1$ by
\[\det(\lambda-L) = \sum_{l=0}^{r+1} (-1)^l \lambda^{r-l+1} \eta_{r,l}(p,q).\]
The determinant $\delta_{r}=\det(\lambda-L)$ may be computed using the recursion
\[\delta_l=(\lambda-p_{l+1}) \delta_{l-1} + q_{l} \delta_{l-2},\qquad l=0,\dots,r\]
with $\delta_{-1}=1$ and $\delta_{-2}=0$. The first three polynomials are given by
\[\eta_{r,0}=1,\qquad
\eta_{r,1}=\sum_{i=1}^{r+1} p_i, \qquad
\eta_{r,2}=\sum_{1\le i<j \le r+1} p_i p_j + \sum_{i=1}^r q_i.\]
More generally,
\begin{equation}\label{cdo}
\eta_{r,l} = \sum p_{i_1}\cdots p_{i_s} q_{j_1}\cdots q_{j_d}, \qquad 1\le l\le r+1, \end{equation}
where the sum is over $1\le i_1 < \cdots < i_s \le r+1$
and $1\le j_1< j_2<\cdots < j_d\le r$ such that $s+2d=l$,
$j_{k+1}>j_k+1$ for all $k$ and
$i_a \notin \{j_b,j_b+1\}$ for all $a$, $b$.

For example,
\begin{gather*}
\eta_{2,3}=p_1 p_2 p_3 + p_1 q_2 + q_1 p_3,\\
\eta_{3,3} = p_1p_2p_3+p_1p_2p_4+p_1p_3p_4+p_2p_3p_4
 + p_1q_2+p_1q_3+p_2q_3+q_1p_3+q_1p_4+q_2p_4,\\
\eta_{3,4}=p_1p_2p_3p_4+p_1p_2q_3+p_1q_2p_4+q_1p_3p_4+q_1q_3.
\end{gather*}

The Hamiltonians of the quantum Toda chain are the (commuting) differential operators
defined by $\H^{r,l}=\eta_{r,l}(p,q)$ with
\begin{equation}\label{pq-def}
p_i=\partial/\partial x_i,\qquad 1\le i \le r+1,\qquad q_i={\rm e}^{x_{i}-x_{i+1}},\qquad 1\le i\le r.
\end{equation}
This choice of $p$ and $q$ satisfy the commutation relations
\begin{equation}\label{cr}
[p_i,p_j]=[q_i,q_j]=0,\qquad [p_i,q_j]=q_i\delta_{ij}-q_j\delta_{i,j+1}.\end{equation}
The operators $\H^{r,l}$ are unambiguously defined
by the polynomials~\eqref{cdo}, as each term in the sum
only contains variables which commute with each other.

Denote by $L_k$ the partial backward shift operator defined by $L_k f(k) = f(k-1)$,
so, for example, if $f(n)$ is a~function of $n\in\Z_+^r$ (or $\C^r$), then
\[L_{n_i}f(n)=f(n-e_i)=f(n_1,\dots,n_{i-1},n_i-1,n_{i+1},\dots,n_r).\]
If we set $q_i=L_{n_i}$ and $p_i=n_i-n_{i-1}$, with the conventions
$n_0=n_{r+1}=0$, then these also satisfy the commutation relations~\eqref{cr}
and thus give rise to a~family of commuting difference operators~${h^{r,l}=\eta_{r,l}(p,q)}$, $0\le l\le r+1$.
We note that $h^{r,0}=1$, $h^{r,1}=0$ and $h^{r,2}=h^r$, where
\[h^r =\sum_{i=1}^r L_{n_i} - \sum_{i=1}^r n_i^2 + \sum_{i=1}^{r-1} n_i n_{i+1}. \]
Note that the recursion~\eqref{rec-ar} may be written as $h^r a_r=0$.

Denote by $~^*\! \H^{r,l}$ and $~^*\! h^{r,l}$
the formal adjoints of these operators. More precisely, we define
$~^*\! \H^{r,l}=\eta_{r,l}(-p,q)$ with $p$, $q$ given by~\eqref{pq-def} and
$~^*\! h^{r,l} = \eta_{r,l}(p,q^*)$ with $p_i=n_i-n_{i-1}$
(with the conventions $n_0=n_{r+1}=0$) and $q_i^*=R_{n_i}$,
where $R_k$ denotes the partial forward shift operator defined by $R_k f(k) = f(k+1)$,
so, for example, if $f(n)$ is a~function of $n\in\Z_+^r$ (or $\C^r$) then
\[R_{n_i}f(n)=f(n+e_i)=f(n_1,\dots,n_{i-1},n_i+1,n_{i+1},\dots,n_r).\]

These two different quantisations are related via the duality relation
\begin{equation}\label{hH-rel}
h_s^{r,l} F(s,x) = {}^*\! \H_x^{r,l} F(s,x),
\end{equation}
where
\[F(s,x)=y^{-s}\equiv \prod_{i=1}^r y_i^{-s_i}=\prod_{i=1}^r {\rm e}^{s_i(x_{i+1}-x_{i})}.\]
This follows immediately from the relations
\[L_{s_i} F(s,x) = y_i F(s,x),\qquad (s_i-s_{i-1}) F(s,x) = - \partial_{x_i} F(s,x),\]
with the convention $s_0=s_{r+1}=0$.

\begin{rem}\label{rem-H}
If we set $q_i=n_i^2 L_{n_i}$ and $p_i=n_i-n_{i-1}$, again with the conventions
$n_0=n_{r+1}=0$, then these also satisfy~\eqref{cr}
and thus give rise to a~family of commuting difference operators
$H^{r,l}=\eta_{r,l}(p,q)$, $0\le l\le r+1$. In this case $H^{r,0}=1$, $H^{r,1}=0$ and
$H^{r,2}=H^r$, where
\[H^r = \sum_{i=1}^r n_i^2 D_{n_i} +\sum_{i=1}^{r-1} n_i n_{i+1}\]
and $D_k = L_k-I$ is the partial backward difference operator defined by $D_k f(k) = f(k-1) - f(k)$,
so, for example, if $f(n)$ is a~function of $n\in\Z_+^r$ (or $\C^r$) then
\[D_{n_i}f(n)=f(n-e_i)-f(n).\]

The difference operators $h^r$ and $H^r$ are related by
\[h^r=N_r(n)^{-1}\circ H^r \circ N_r(n),\qquad N_r(n)=\prod_{i=1}^r n_i!^2.\]
If we define
\begin{equation}\label{Ar-def}
A_r(n)=N_r(n) a_r(n),
\end{equation} then $A_r$ is the unique solution to $H^rA_r=0$
with $A_r(0)=1$ and the convention $A_r(n)=0$ for~${n\notin\Z_+^r}$.
\end{rem}

\subsection{Class one Whittaker functions}

For $\mu\in\C$, $x\in\R^{r+1}$ and $x'\in\R^r$, define
\[
\Q^{(r)}_\mu(x,x')=\exp\left( \mu \left(\sum_{i=1}^{r+1}x_i-\sum_{i=1}^r x'_i\right)
-\sum_{i=1}^r\big({\rm e}^{x_i-x'_i}+{\rm e}^{x'_i-x_{i+1}}\big)\right).\]
Set \smash{$\psi_\nu^{(0)}(x)={\rm e}^{\nu x}$} and, for $r\ge 1$, $\nu\in\C^{r+1}$ and $x\in\R^{r+1}$,
\[
\psi^{(r)}_{\nu_1,\dots,\nu_{r+1}}(x)=
\int_{\R^r} \Q^{(r)}_{\nu_{r+1}}(x,x') \psi^{(r-1)}_{\nu_1,\dots,\nu_{r}}(x'){\rm d}x'.
\]
Then $\psi_\nu=\psi^{(r)}_\nu$ satisfies the eigenvalue equations
\begin{equation}\label{jep} \H^{r,l} \psi_\nu = e_l(\nu) \psi_\nu,\qquad 1\le l\le r+1,\end{equation}
where $e_l(\nu)$ denotes the $l^{th}$ elementary symmetric polynomial
\[
e_l(\nu)=\sum_{i_1<\cdots<i_l} \nu_{i_1}\dots\nu_{i_l}.
\]
Equivalently,
$\Delta^{(r)}(\lambda) \psi_\nu = c_\lambda(\nu) \psi_\nu$,
where
\[\Delta^{(r)}(\lambda) = \sum_{l=0}^{r+1} (-1)^l \lambda^{r-l+1} \H^{r,l},\qquad c_\lambda(\nu)=\prod_{i=1}^{r+1} (\lambda-\nu_i).\]

These eigenfunctions are called class one $\operatorname{GL}(r+1,\R)$ Whittaker functions,
for more background see~\cite{gklo} and references therein.
The joint eigenfunction property~\eqref{jep} may be seen as a~consequence
of the intertwining relation~\cite[Lemma 4.1]{gklo}
\[
\Delta^{(r)}(\lambda) \circ \Q^{(r)}_\mu = (\lambda-\mu) \Q^{(r)}_\mu \circ \Delta^{(r-1)}(\lambda).
\]
Although it is not obvious from the above definition,
the function $\psi_\nu(x)$ is symmetric in the parameters $\nu_i$.
If $r=1$ and $\nu_1+\nu_2=0$, then
$\psi_\nu(x)=2 K_{\nu_1-\nu_2}\bigl( 2 {\rm e}^{(x_1-x_2)/2}\bigr)$,
where $K_a$ is the modified Bessel function of the second kind
\[K_a(z)= \frac12 \int_0^\infty t^{a-1} \exp\left(-\frac{z}{2}\bigl(t+t^{-1}\bigr)\right) {\rm d}t.\]

Let us write \smash{$\C_0^{r+1}=\bigl\{\nu\in\C^{r+1}\mid \sum_i \nu_i=0\bigr\}$}.
If $\nu\in\C_0^{r+1}$, then $\psi_\nu(x)$ only depends on $x$
through the variables $y_i={\rm e}^{x_i-x_{i+1}}$, $i=1,\dots,r$, and
so in this case let us denote $\Psi_\nu(y)=\psi_\nu(x)$.

\subsection{Fundamental Whittaker functions}\label{fwf}

Let $\nu\in\C_0^{r+1}$ and consider the series
\[
\phi_\nu(x)=\sum_n a_{r,\nu}(n) {\rm e}^{(\xi_n+\nu,x)},
\]
where the sum is over $n\in \Z_+^{r}$ and
$\xi_n=\sum_{i=1}^{r} n_i (e_i-e_{i+1})$.
Then $\phi_\nu$ satisfies the eigenvalue equation
$\H^{r,2}\phi_\nu = e_2(\nu)\phi_\nu$,
at least formally, provided the coefficients $a_{r,\nu}(n)$ satisfy the recursion\looseness=-1%
\begin{equation}\label{rec} \left[\sum_{i=1}^{r} n_i^2 - \sum_{i=1}^{r-1}n_i n_{i+1}
+\sum_{i=1}^{r} (\nu_i-\nu_{i+1}) n_i \right]
a_{r,\nu}(n) = \sum_{i=1}^{r} a_{r,\nu}(n-e_i),\end{equation}
with the convention $a_{r,\nu}(n) =0$ for $n\notin \Z_+^{r}$.
Since $\sum_i\nu_i=0$, the function $\phi_{\nu}(x)$ only depends on~$x$
through $y_i={\rm e}^{x_i-x_{i+1}}$, $i=1,\dots,r$, so let us denote $\Phi_\nu(y)=\phi_\nu(x)$.
These functions, originally introduced by Hashizume~\cite{h},
are known as fundamental $\operatorname{GL}(r+1,\R)$ Whittaker functions.

Ishii and Stade~\cite{is} obtained the following recursive formula
for the coefficients $a_{r,\nu}(n)$.
For~${\nu\in\C_0^{r+1}}$, $n\in \Z_+^{r}$ and $k\in \Z_+^{r-1}$, define
\[
q_{r,\nu}(n,k)=\prod_{i=1}^{r} \frac1{(n_i-k_i)! \Gamma(n_i-k_{i-1}+\nu_i-\nu_{r+1}+1)},
\]
with the convention $k_0=k_{r}=0$.
Define
$a_{r,\nu}(n)$, $r\ge 1$, $n\in \Z_+^{r}$, $\nu\in\C_0^{r+1}$ recursively by
\begin{equation}\label{def-a1} a_{1,\nu}(n) =\frac1{n! \Gamma(n+\nu_1-\nu_2+1)}\end{equation}
and, for $r\ge 2$,
\begin{equation}\label{def-ar}
a_{r,\nu}(n) =\sum_{k} q_{r,\nu}(n,k) a_{r-1,\mu}(k),
\end{equation}
where $\mu_i=\nu_i+\nu_{r+1}/r$, $1\le i\le r$, and the sum is over $k\in \Z_+^{r-1}$.
Then, for each $r\ge 1$,
$a_{r,\nu}(n)$ satisfies the recursion~\eqref{rec} with
\[ a_{r,\nu}(0) =\prod_{i<j}\frac{1}{\Gamma(\nu_i-\nu_j+1)}.\]
It also follows from the definition that, for $k\in\Z_+^r$,
\begin{equation}\label{ar0}
a_{r,\nu}(0,k)=\prod_{j=2}^{r+1} \frac{1}{\Gamma(\nu_1-\nu_j+1)} a_{r-1,\tau}(k),
\end{equation}
where $\tau_i=\nu_{i+1}+\nu_1/r$, $i=1,\dots,r$.

If $r=1$, then
$\Phi_\nu(y)=I_a\left(2\sqrt{y}\right)$,
where $a=\nu_1-\nu_2$ and $I_a$ is the modified Bessel function of the first kind
\[I_a(z)=\sum_{n=0}^\infty \frac1{n! \Gamma(n+a+1)} \left(\frac{z}{2}\right)^{2n+a}.\]
For $r=2$, we have
\begin{equation}\label{bf}
a_{2,\nu}(n,m)=\frac{\Gamma(n+m+a+b+1)}{n! \Gamma(n+a+1) \Gamma(n+a+b+1) m! \Gamma(m+b+1) \Gamma(m+a+b+1)},
\end{equation}
where $a=\nu_1-\nu_2$, $b=\nu_2-\nu_3$. The formula~\eqref{bf} is due to Bump~\cite{bump}.

When $\nu=0$, the recursion~\eqref{def-ar} may be conveniently expressed as follows.
Let $\Pi^r$ be the set of arrays $(\pi_{ij},\ 2\le i+j\le r+1)$ of non-negative integers
satisfying $ \pi_{ij} \ge \max\{\pi_{i,j-1},\pi_{i-1,j}\}$,
with the convention $\pi_{i0}=\pi_{0j}=0$. For $\pi\in\Pi^r$, let $b(\pi)\in\Z_+^r$
denote it `boundary values' $b(\pi)=(\pi_{1,r},\pi_{2,r-1},\dots,\pi_{r,1})$ and
let $\Pi^r_n$ be the set of $\pi\in\Pi^r$ with $b(\pi)=n$. For $\pi\in\Pi^r$, set
\[ w(\pi)=\prod_{1\le i+j \le r+1} \frac1{(\pi_{ij}-\pi_{i,j-1})! (\pi_{ij}-\pi_{i-1,j})!}.\]
Then~\eqref{def-ar} may be written as
\[ a_r(n) = \sum_{\pi\in\Pi^r_n} w(\pi). \]
Equivalently, the normalised coefficients $A_r(n)$ defined by~\eqref{Ar-def}
are given by
\begin{equation}\label{Ar-bs} A_r(n) = \sum_{\pi\in\Pi^r_n} \prod_{1\le i+j \le r+1} \binom{\pi_{i,j}}{\pi_{i-1,j}} \binom{\pi_{i,j}}{\pi_{i,j-1}}. \end{equation}

\subsection{Mellin transforms of class one Whittaker functions}\label{mellin}

For $\nu\in\C_0^{r+1}$ and $s\in\C^r$, with $\operatorname{Re}(s_i)$ sufficiently large, define
\[
M_{r,\nu}(s)=\int_{\R^r} \Psi_\nu(y) \prod_{i=1}^r y_i^{s_i} \frac{{\rm d}y_i}{y_i}.
\]
From the corresponding property of $\Psi_\nu$,
the Mellin transform $M_{r,\nu}(s)$ is symmetric in the parameters $\nu_1,\dots,\nu_{r+1}$
and extends to a~meromorphic function of $s\in\C^r$~\cite{fg}.
By~\eqref{hH-rel} and~\eqref{jep}, this extension satisfies the eigenvalue equations
\begin{equation}\label{jepm} h_s^{r,l} M_{r,\nu}(-s) = e_l(\nu) M_{r,\nu}(-s),\qquad 2\le l\le r+1,\end{equation}
or, equivalently,
\begin{equation}\label{msum}
d_s^{(r)}(\lambda) M_{r,\nu}(-s) = c_\lambda(\nu) M_{r,\nu}(-s),
\end{equation}
where
\[
d^{(r)}(\lambda) = \sum_{l=0}^{r+1} (-1)^l \lambda^{r-l+1} h^{r,l},\qquad c_\lambda(\nu)
=\prod_{i=1}^{r+1} (\lambda-\nu_i).
\]

We remark that the difference equations~\eqref{jepm} are in fact equivalent to those
recently obtained by Stade and Trinh~\cite{st}. Indeed, the main result
of~\cite{st} may be stated, in the present notation, as the statement that
\smash{$d_s^{(r)}(\nu_i) M_{r,\nu}(-s)=0$} for $i=1,\dots,r+1$. On the other hand,
\smash{$d_s^{(r)}(\lambda) M_{r,\nu}(-s)$} is a~polynomial in $\lambda$ of degree $r+1$, so
must agree with $c_\lambda(\nu)$ up to a~factor which does not depend on $\lambda$.
Letting $\lambda\to\infty$ and recalling that $h^{r,0}=1$, we recover the identity~\eqref{msum}.

For $r=1,2$, we have the explicit formulas~\cite{bump,is}
\[M_{1,\nu} (s)=\Gamma(s+\nu_1)\Gamma(s+\nu_2),\qquad
M_{2,\nu}(s)=\Gamma(s_1+s_2)^{-1} \prod_{i=1}^3 \Gamma(s_1+\nu_i) \Gamma(s_2-\nu_i).
\]
The following remarkable factorisation in the general case was
originally conjectured by Bump and Friedberg~\cite{bf}
and later proved by Stade~\cite{stade-ajm}.
\begin{thm}[Stade]\label{stade}
Let $s_1,s_2\in\C$ and, for $i\ge 3$,
\[s_i=\begin{cases} is_2/2, & i \mbox{ \rm even},\\ s_1+(i-1)s_2/2, & i \mbox{ \rm odd},\end{cases}\]
so that
\[s_1,s_2,s_3,\ldots = s_1,s_2,s_1+s_2,2s_2,s_1+2s_2,3s_2,s_1+3s_2,\ldots.\]
Then
\begin{equation}\label{mf}
\Gamma(s_{r+1}) M_{r,\nu}(s_1,\dots,s_r)=
\prod_{i=1}^{r+1} \Gamma(s_1+\nu_i) \prod_{1\le i<j\le r+1} \Gamma(s_2+\nu_i+\nu_j).\end{equation}
\end{thm}

\subsection{Shifted coefficients of fundamental Whittaker functions}\label{fun}

For $\nu\in\C^{r+1}_0$, let us denote $\nu'\in\C^r$, $\nu'_i=\sum_{j=1}^i\nu_j$ for $i=1,\dots,r$.
For $\nu\in\C^{r+1}_0$ and $n\in\nu'+\Z_+^r$, define
\begin{equation}\label{fa} f_{r,\nu}(n)= a_{r,\nu}(n-\nu'), \end{equation}
with the convention $f_{r,\nu}(n)=0$ if $n\in ( \nu'+\Z^r)\backslash(\nu'+\Z_+^r)$.
For example,
\[f_{1,\nu}(n)=\frac1{\Gamma(n-\nu_1+1)\Gamma(n-\nu_2+1)},\]
and, by~\eqref{bf},
\[ f_{2,\nu}(n,m)=\frac{\Gamma(n+m+1)}{\prod_{i=1}^3 \Gamma(n-\nu_i+1)\Gamma(m+\nu_i+1)}.\]

In this notation, the recursion~\eqref{rec} may be written as $h^r f_{r,\nu} = e_2(\nu) f_{r,\nu}$.
In this section, we will show that
$h^{r,l} f_{r,\nu} = e_l(\nu) f_{r,\nu}$, $ 2\le l\le r+1$.
We will also use this to obtain a~useful factorisation of $f_{r,\nu}(n)$, analogous to~\eqref{mf},
for special values of the argument.

The recursive definition~\eqref{def-a1} and~\eqref{def-ar} may be reformulated to give a~recursive
definition of the functions $f_{r,\nu}$, as follows.
For $\theta\in\C$, $n\in \C^{r}$ and $k\in \C^{r-1}$, define
\[
\Lambda^\theta_{r}(n,k)=\prod_{i=1}^{r} \frac1{\Gamma(n_i-k_i+i\theta/r+1) \Gamma(n_i-k_{i-1}-(r-i+1)\theta/r+1)},
\]
with the convention $k_0=k_{r}=0$.

Define $f_{r,\nu}(n)$, $r\ge 1$, $\nu\in\C_0^{r+1}$, $n\in \nu'+\Z_+^{r}$, recursively by
\[ f_{1,\nu}(n) =\frac1{\Gamma(n-\nu_1+1) \Gamma(n+\nu_1+1)}\]
and, for $r\ge 2$,
\begin{equation}\label{def-fr}
f_{r,\nu}(n) =\sum_{k} \Lambda^{\nu_{r+1}}_{r}(n,k) f_{r-1,\mu}(k),
\end{equation}
where $\mu_i=\nu_i+\nu_{r+1}/r$, $1\le i\le r$, and the sum is over $k\in\mu'+ \Z_+^{r-1}$.

Recall that $h^{r,l}_n =\eta_{r,l}(p,q)$ and $~^*\! h^{r,l}_n =\eta_{r,l}(p,q^*)$
with $q_i=L_{n_i}$, $q^*_i=R_{n_i}$ and $p_i=n_i-n_{i-1}$, with the
conventions $n_0=n_{r+1}=0$. Let
\begin{gather*}
d^{(r)}(\lambda)=\sum_{l=0}^{r+1} (-1)^l \lambda^{r-l+1} h^{r,l},\qquad
 {}^*\! d^{(r)}(\lambda)=\sum_{l=0}^{r+1} (-1)^l \lambda^{r-l+1} {}^*\! h^{(r,l)},\\
c_\lambda(\nu)=\prod_{i=1}^{r+1} (\lambda-\nu_i).
\end{gather*}

\begin{thm}\label{jef-f}
The functions $f_{r,\nu}(n)$ satisfy
\begin{equation}
h^{r,l} f_{r,\nu} = e_l(\nu) f_{r,\nu}, \qquad 2\le l\le r+1.
\end{equation}
Equivalently,
\begin{equation}\label{jef-f2} d^{(r)}_n(\lambda) f_{r,\nu}= c_\lambda(\nu) f_{r,\nu}.\end{equation}
 \end{thm}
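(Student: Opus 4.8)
The plan is to establish the joint eigenfunction property inductively on $r$, transporting it through the recursive definition \eqref{def-fr} exactly as the class one Whittaker functions carry \eqref{jep} through their integral recursion via the intertwining relation $\Delta^{(r)}(\l)\circ\Q^{(r)}_\mu=(\l-\mu)\,\Q^{(r)}_\mu\circ\Delta^{(r-1)}(\l)$. First I would recast the sum kernel $\L^\theta_r(n,k)$ in \eqref{def-fr} as a \emph{difference operator} version of Givental's integral kernel $\Q^{(r)}_\mu$: the key observation is that, after passing to the multiplicative variables, the factorials $1/(n_i-k_i+i\theta/r)!$ and $1/(n_i-k_{i-1}-(r-i+1)\theta/r)!$ play the role of the exponentials $e^{-(e^{x_i-x'_i}+e^{x'_i-x_{i+1}})}$, and the shifted linear argument plays the role of the $\mu\sum x_i$ prefactor. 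Concretely, I expect a relation of the form
$$
d^{(r)}_n(\l)\circ \L^{\nu_{r+1}}_r = (\l-\nu_{r+1})\,\L^{\nu_{r+1}}_r\circ d^{(r-1)}_\mu(\l),
$$
where on the right the operator $d^{(r-1)}_\mu(\l)$ acts in the $k$ variables and $\mu_i=\nu_i+\nu_{r+1}/r$. Granting this, the induction is immediate: applying $d^{(r)}_n(\l)$ to $f_{r,\nu}(n)=\sum_k \L^{\nu_{r+1}}_r(n,k) f_{r-1,\mu}(k)$ and commuting it through $\L$ gives $(\l-\nu_{r+1})\sum_k \L(n,k)\, d^{(r-1)}_k(\l) f_{r-1,\mu}(k)=(\l-\nu_{r+1})\sum_k\L(n,k)\,c_\l(\mu)f_{r-1,\mu}(k)=(\l-\nu_{r+1})c_\l(\mu)f_{r,\nu}(n)$, and since $\{\mu_i\}\cup\{\nu_{r+1}\}=\{\nu_i\}$ as multisets, $(\l-\nu_{r+1})c_\l(\mu)=c_\l(\nu)$, which is \eqref{jef-f2}. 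The base case $r=1$ is a direct check: $d^{(1)}_n(\l)=\l^2-\l h^{1,1}+h^{1,2}$ with $h^{1,1}=0$ and $h^{1,2}=h^1=L_{n}-n^2$, and one verifies $(L_n-n^2)\tfrac1{(n-\nu_1)!(n+\nu_1)!}=\nu_1\nu_2\,\tfrac1{(n-\nu_1)!(n+\nu_1)!}$ using $\nu_1+\nu_2=0$, i.e. $\nu_1\nu_2=-\nu_1^2$; indeed shifting $n\mapsto n-1$ produces the factor $(n-\nu_1)(n+\nu_1)=n^2-\nu_1^2$, so $L_n f_{1,\nu}=(n^2-\nu_1^2)f_{1,\nu}=(n^2+\nu_1\nu_2)f_{1,\nu}$, giving $h^1 f_{1,\nu}=\nu_1\nu_2 f_{1,\nu}=e_2(\nu)f_{1,\nu}$ as required.

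The main obstacle is proving the intertwining relation for the difference kernel $\L^{\nu_{r+1}}_r$, and in particular getting the bookkeeping of the half-integer-type shifts $i\theta/r$ and $-(r-i+1)\theta/r$ right so that the $q^*$-type forward shifts in $d^{(r-1)}_\mu(\l)$ on the right match the $q$-type backward shifts in $d^{(r)}_n(\l)$ on the left after reindexing. My approach here is to verify it at the level of the Lax matrix / characteristic polynomial rather than term by term: it suffices to show the single operator identity $\Delta_n\circ\L=(\l-\nu_{r+1})\L\circ\Delta_k$ where $\Delta=\det(\l-L)$ for the two $L$-matrices built from $(p,q)=(n_i-n_{i-1},\,L_{n_i})$ on $r+1$ strands and from the shifted $(p,q^*)$-type data on $r$ strands, and then \eqref{jef-f2} for all coefficients of $\l$ follows simultaneously. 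To prove that single identity I would use the three-term recursion $\delta_l=(\l-p_{l+1})\delta_{l-1}+q_l\delta_{l-2}$ and check that conjugating the backward shift $L_{n_i}$ by the kernel $\L^\theta_r(\cdot,k)$, i.e. computing $\L^\theta_r(n-e_i,k)/\L^\theta_r(n,k)$, produces precisely a forward shift $R_{k_{i-1}}R_{k_i}^{-1}$-type expression in the $k$ variables with matching linear cofactors $k_j-k_{j-1}$ — this is the discrete analogue of the integration-by-parts step ($(s_i-s_{i-1})F=-\partial_{x_i}F$, $L_{s_i}F=y_iF$) recorded after \eqref{hH-rel}, and it is really a telescoping identity among factorial ratios.

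As a sanity check before committing to the general argument, I would verify the case $r=2$ explicitly using the closed form $f_{2,\nu}(n,m)=(n+m)!\big/\prod_{i=1}^3(n-\nu_i)!(m+\nu_i)!$ from \eqref{bf}: here one needs $h^{2,2}f_{2,\nu}=e_2(\nu)f_{2,\nu}$ (which is just the recursion \eqref{rec} specialised, already known) \emph{and} the new relation $h^{2,3}f_{2,\nu}=e_3(\nu)f_{2,\nu}=\nu_1\nu_2\nu_3 f_{2,\nu}$, where $h^{2,3}=\h_{2,3}(p,q)=p_1p_2p_3+p_1q_2+q_1p_3$ with $p_i=n_i-n_{i-1}$, $q_i=L_{n_i}$, $n_0=n_3=0$; expanding $p_1p_2p_3=n_1(n_2-n_1)(-n_2)$ and $p_1q_2=n_1 L_{n_2}$, $q_1 p_3=L_{n_1}(-n_2)$ and collecting, using that shifting $n_1\mapsto n_1-1$ multiplies $f_{2,\nu}$ by $\prod_i(n_1-\nu_i)/(n_1+n_2)$ and shifting $n_2\mapsto n_2-1$ multiplies it by $\prod_i(n_2+\nu_i)/(n_1+n_2)$, the cubic-in-$\nu$ identity should fall out from $\sum_i\nu_i=0$ and symmetric-function identities — this both confirms the statement and pins down the sign and shift conventions that the general intertwining computation must reproduce.
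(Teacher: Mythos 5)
Your overall strategy is the same as the paper's: induct on $r$ through the recursion \eqref{def-fr} by proving an intertwining relation for the kernel $\L^{\theta}_r$, verified at the level of the full characteristic polynomial $d^{(r)}(\l)$ rather than coefficient by coefficient. However, there are two concrete gaps. First, the key lemma is misstated. The correct intertwining relation is
$$d^{(r)}_n(\l)\, \L_r^\theta(n,k) = (\l-\theta)\; {}^*\! d^{(r-1)}_k(\l+\theta/r)\, \L_r^\theta(n,k),$$
i.e.\ the operator appearing on the right is the \emph{formal adjoint} acting in $k$, and its spectral parameter is shifted to $\l+\theta/r$. Your version omits the shift, and the multiset identity you invoke to close the induction, $\{\mu_i\}\cup\{\nu_{r+1}\}=\{\nu_i\}$, is false: $\mu_i=\nu_i+\nu_{r+1}/r\ne\nu_i$ (the shift is exactly what makes $\sum_i\mu_i=0$). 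Consequently $(\l-\nu_{r+1})c_\l(\mu)\ne c_\l(\nu)$. The bookkeeping is repairable — one has $(\l-\theta)\,c_{\l+\theta/r}(\mu)=(\l-\nu_{r+1})\prod_{i=1}^r(\l+\nu_{r+1}/r-\nu_i-\nu_{r+1}/r)=c_\l(\nu)$ — but as written your chain of equalities breaks at this step. Relatedly, because the right-hand operator is the adjoint acting on the kernel, moving it onto $f_{r-1,\mu}(k)$ requires summation by parts with no boundary contribution; this is where one must use that $f_{r-1,\mu}(k)$ vanishes when some $k_i=\mu_i'-1$, a point your operator-composition formulation of $\L\circ d^{(r-1)}$ silently assumes.

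Second, the hardest step — actually proving the intertwining relation — is left as a plan. The paper's mechanism is cleaner than the telescoping you sketch: dividing the identity by $\L_r^\theta(n,k)$ turns every backward shift $L_{n_i}$ into multiplication by $(n_i-k_i+i\theta/r)(n_i-k_{i-1}-(r-i+1)\theta/r)$ and every forward shift $R_{k_j}$ into a corresponding multiplication, reducing the claim to an identity of characteristic polynomials $\det(\l-\theta-A)=\det(\l-\theta-B)$ for two explicit tridiagonal matrices, which is then proved by exhibiting lower- and upper-triangular matrices $C,D$ with $A=CD$ and $B=DC$. Note also that a single shift $L_{n_i}$ applied to the kernel produces only a multiplication operator, not an $R_{k_{i-1}}R_{k_i}^{-1}$-type expression as you suggest; the adjoint structure in $k$ only emerges for the full operator $d^{(r)}_n(\l)$. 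Your $r=1$ base case and the $r=2$ sanity-check setup are correct.
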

\begin{proof}
We will show that
\begin{equation}\label{int-delta} d^{(r)}_n(\lambda) \Lambda_r^\theta(n,k)
= (\lambda-\theta) {}^*\! d^{(r-1)}_k(\lambda+\theta/r) \Lambda_r^\theta(n,k).\end{equation}
To see that this implies the statement of the proposition, note that,
from the definition, $f_{r-1,\mu} (k)$ vanishes whenever $k_i=\mu_i'-1$, for some $i$.
The identity~\eqref{jef-f2} therefore follows from~\eqref{int-delta}.

Dividing both sides of~\eqref{int-delta} by $\Lambda_r^\theta(n,k)$, it reduces to
$\det (\lambda-\theta-A) = \det(\lambda-\theta-B)$,
where \smash{$A=(a_{ij})_{i,j=1}^{r+1}$} and \smash{$B=(b_{ij})_{i,j=1}^{r+1}$} are the tridiagonal matrices defined by
\begin{gather*}
a_{ij} = \begin{cases} -1, & j=i+1,\\
n_j-n_{j-1}-\theta ,& j=i,\\
\left(n_j-k_j+\dfrac{j\theta}{r}\right)\left(n_j-k_{j-1}-\dfrac{(r-j+1)\theta}{r}\right), & j=i-1,\\
0 ,& \mbox{otherwise}, \end{cases}
\\
b_{ij} = \begin{cases} -1, & j=i+1,\\
k_j-k_{j-1}-(r+1)\theta/r ,& j=i \mbox{ and } i\le r,\\
\left(n_j-k_j+\dfrac{j\theta}{r}\right)\left(n_i-k_{i-1}-\dfrac{(r-i+1)\theta}{r}\right), & j=i-1 \mbox{ and } i\le r,\\
0 ,& \mbox{otherwise}, \end{cases}
\end{gather*}
with the usual conventions $n_0=n_{r+1}=0$ and $k_0=k_r=0$.
The result follows, noting that $A=CD$ and $B=DC$, where
\begin{gather*}
C=\begin{bmatrix} 1 & 0 & 0 & \cdots & 0\\
n_1-k_1 +\frac{\theta}{r}& 1 & 0 & \cdots & 0\\
0 & n_2-k_2 +\frac{2\theta}{r} & \smash{\ddots} & \smash{\ddots} & \smash{\vdots}\\
\smash{\vdots}&&&1&0\\
0&\dots&0&n_r + \theta &1 \end{bmatrix},\\
D=\begin{bmatrix} n_1 -\theta & -1 & 0 & \cdots & 0\\
0 & n_2-k_1 -\frac{(r-1)\theta}{r}& -1 & \cdots & 0\\
0 & 0 & \smash{\ddots} & \smash{\ddots} & \smash{\vdots}\\
\smash{\vdots}&&&n_r-k_{r-1}-\frac{\theta}{r}&-1\\
0&\dots&0&0 &0 \end{bmatrix}.\tag*{\qed}
\end{gather*}\renewcommand{\qed}{}
\end{proof}

\begin{rem}
In the case $\nu=0$, Theorem~\ref{jef-f} states that $a_r(n)$ is annihilated by
the commuting difference operators $h^{r,l}$, $2\le l\le r+1$ or, equivalently,
that $A_r(n)$ is
annihilated by the commuting difference operators $H^{r,l}$, $2\le l\le r+1$,
as defined in Remark~\ref{rem-H}.
For example,
\[A_2(n,m)=\sum_k {n \choose k}{m \choose k} ={n+m\choose n}\]
is annihilated by the two commuting difference operators
\[H^{2,2}=n^2 D_n+m^2 D_m+nm,\qquad H^{2,3}=-n^2 mD_n+nm^2 D_m.\]
We remark that
\[D_n+D_m+1 = \frac{1}{nm} H^{2,2}+\frac{n-m}{n^2m^2} H^{2,3},\]
and so we recover Pascal's identity
$A_2(n,m)=A_2(n-1,m)+A_2(n,m-1)$.
For $r=3$, Theorem~\ref{jef-f} states that
\[A_3(n,m,l) = \sum_k {m \choose k}^2 {n+k \choose m} {l+k \choose m}\]
is annihilated by the three commuting difference operators
\begin{align*}
H^{3,2} &=n^2 D_n + m^2 D_m +l^2 D_l+nm+ml,\\
H^{3,3} &=-mn^2D_n+(n-l)m^2D_m+ml^2D_l,\\
H^{3,4} &=n^2l^2D_{n,l}-l(l-m)n^2D_n-nm^2lD_m+n(m-n)l^2D_l.
\end{align*}
Here we are using the notation $D_{n,l}f(n,l)=f(n-1,l-1)-f(n,l)$.
As remarked in \cite[Remark~2.5]{noc23},
the diagonal values $a_n=A_3(n,n,n)$ are the Ap\'ery numbers
\[a_n=\sum_k {n \choose k}^2 {n +k \choose k}^2\]
associated with $\zeta(3)$. It is well known that this sequence satisfies the three-term recurrence
\[
n^3 a_n = \big(34n^3-51n^2+27n-5\big)a_{n-1}-(n-1)^3 a_{n-2},
\]
with $a_0=1$ and $a_1=5$. We remark that this recurrence may
also be derived using the difference equations
$H^{3,2}A_3=H^{3,3}A_3=H^{3,4}A_3=0$.
\end{rem}

\begin{thm}\label{ff} Let $n,m\in\C$ and define, for $i\ge 1$,
\[ n_i=\begin{cases}
 im/2 ,& i \mbox{ \rm even},\\ n+(i-1)m/2, & i \mbox{ \rm odd},\end{cases}\]
so that
\begin{equation}\label{n1n2} n_1,n_2,n_3,\ldots =n,m,n+m,2m,n+2m,3m,n+3m,4m,\ldots . \end{equation}
Suppose $\nu\in\C_0^{r+1}$, $(n_1,\dots,n_r)\in \nu'+\Z_+^{r}$ and $\operatorname{Re}(n_{r+1})>-1$. Then
\[f_{r,\nu}(n_1,\dots,n_r) = \Gamma(n_{r+1}+1) \prod_i \frac1{\Gamma(n-\nu_i+1) }\prod_{i<j} \frac1{\Gamma(m-\nu_i-\nu_j+1)}.\]
\end{thm}
\begin{proof}
We prove this by induction over $r$. For $r=1$, it follows from the definition.

Let us define, for $i\ge 1$,
\[ p_i(n,m)=\begin{cases} im/2 ,& i \mbox{ \rm even},\\ n+(i-1)m/2 ,& i \mbox{ \rm odd},\end{cases}\]
and, for each $r\ge 1$,
$p^r(n,m)=(p_1(n,m),\dots,p_r(n,m))$.
Note that $n_i=p_i(n,m)$ and $(n_1,\dots,\allowbreak n_r)=p^r(n,m)$.
We also note the identity
\begin{equation}\label{pki}
p_{i+1}(n,m)-n+ix=p_i(m-n+x,m+2x).
\end{equation}

Let us denote
$g_{r,\nu}(n,m)=f_{r,\nu}(p^r(n,m))=f_{r,\nu}(n_1,\dots,n_r)$.
We first consider the case $n=\nu_1$. From the relation~\eqref{fa},
\[g_{r,\nu}(\nu_1,m) = f_{r,\nu}(p^r(\nu_1,m)) = a_{r,\nu}(p^r(\nu_1,m)-\nu') = a_{r,\nu}(0,k),\]
where $k_i = p_{i+1}(\nu_1,m)-\nu'_{i+1} \in\Z_+$, $i=1,\dots,r-1$. Thus, by~\eqref{ar0},
\[g_{r,\nu}(\nu_1,m) = \prod_{j=2}^{r+1} \frac{1}{\Gamma(\nu_1-\nu_j+1)} a_{r-1,\tau}(k),\]
where $\tau_i=\nu_{i+1}+\nu_1/r$, $i=1,\dots,r$. Using~\eqref{fa} again, we may write
$a_{r-1,\tau}(k) = f_{r-1,\tau}(k+\tau')$.
The identity~\eqref{pki} gives, for $i=1,\dots,r-1$,
\begin{align*}
 k_i+\tau'_i &= p_{i+1}(\nu_1,m)-\nu'_{i+1} +\tau'_i = p_{i+1}(\nu_1,m) - \nu_1 + i\nu_1/r \\
&= p_i\left(m-\nu_1+\frac{\nu_1}{r},m+\frac{2\nu_1}{r}\right)
\end{align*}
and, moreover, taking $i=r$,
\begin{equation}\label{pri} p_{r}\left(m-\nu_1+\frac{\nu_1}{r},m+\frac{2\nu_1}{r}\right) = p_{r+1}(\nu_1,m).\end{equation}
Hence
\[g_{r,\nu}(\nu_1,m) = \prod_{j=2}^{r+1} \frac{1}{\Gamma(\nu_1-\nu_j+1)} g_{r-1,\tau}\left(m-\nu_1+\frac{\nu_1}{r},m+\frac{2\nu_1}{r}\right).\]
By the induction hypothesis, and~\eqref{pri},
\begin{gather*}
 g_{r-1,\tau}\left(m-\nu_1+\frac{\nu_1}{r},m+\frac{2\nu_1}{r}\right)\\
\qquad= \Gamma\left(p_{r}\left(m-\nu_1+\frac{\nu_1}{r},m+\frac{2\nu_1}{r}\right)+1\right)\\
\phantom{\qquad= }{}\times
 \prod_{j=2}^{r+1} \frac1{\Gamma(m-\nu_1-\nu_j+1) }\prod_{2\le i<j\le r+1} \frac1{\Gamma(m-\nu_i-\nu_j+1)} \\
\qquad= \Gamma\left(p_{r+1}(\nu_1,m)+1\right) \prod_{1\le i<j\le r+1} \frac1{\Gamma(m-\nu_i-\nu_j+1)}.
\end{gather*}
Thus,
\[g_{r,\nu}(\nu_1,m) = \Gamma\left(p_{r+1}(\nu_1,m)+1\right) \prod_{j=2}^{r+1} \frac{1}{\Gamma(\nu_1-\nu_j+1)} \prod_{1\le i<j\le r+1} \frac1{\Gamma(m-\nu_i-\nu_j+1)},\]
as required.

It therefore remains to show that
\[
g_{r,\nu}(n,m) = \frac{\Gamma(p_{r+1}(n,m)+1)}{\Gamma(p_{r+1}(\nu_1,m)+1)}
\prod_{j=1}^{r+1} \frac{\Gamma(\nu_1-\nu_j+1)}{\Gamma(n-\nu_j+1)} g_{r,\nu}(\nu_1,m).
\]
We will deduce this from the following recursion:
\begin{equation}\label{rec-alpha}[h_r(n,m) L_n - c_n(\nu)] g_{r,\nu}(n,m) = 0,\end{equation}
where
\[h_r(n,m)=\begin{cases}
1 ,& r \mbox{ odd},\\
(n+rm/2), & r \mbox{ even}.\end{cases}\]

To prove~\eqref{rec-alpha}, we use the following identity: for
\[d^{(r)}_{n_1,\dots,n_r}(\lambda) = \sum_{l=0}^{r+1} (-1)^l \lambda^{r-l+1} h^{r,l}_{n_1,\dots,n_r}\]
with $n_1,n_2,\dots$ given by~\eqref{n1n2} and $\lambda=n$,
\begin{equation}
\label{id} d^{(r)}_{(n_1,\dots,n_r)}(\lambda)=
\begin{cases} q_1q_3 \dots q_r, & r \mbox{ odd},\\
(n+rm/2) q_1q_3 \dots q_{r-1} ,& r \mbox{ even}.\end{cases}\end{equation}
Indeed, let us denote by $L^{(r)}$ the Lax matrix~\eqref{lax} with
$p=p^{(r)}=(n_1,n_2-n_1,\dots,n_{r+1}-n_r,-n_{r+1})$
and $n_1,n_2,\dots$ given by~\eqref{n1n2}.
Then
\[\delta^{(r,r+1)} = \big(n-p^{(r)}_{r+1}\big) \delta^{(r,r)} + q_r \delta^{(r,r-1)},\]
where $\delta^{(r,j)}$ denotes the $j^{th}$ principal minor of $n-L^{(r)}$.
Moreover,
\[\delta^{(r,r)}=\begin{cases}
\delta^{(r-1,r)}, & r \mbox{ even},\\
0, & r \mbox{ odd},\end{cases}\]
and
\[\delta^{(r,r-1)}=\begin{cases}
\delta^{(r-2,r-1)}, & r \mbox{ odd},\\
0 ,& r \mbox{ even}.\end{cases}\]
Putting these together yields the identity~\eqref{id}.

Now, by Theorem~\ref{jef-f}, we have
\smash{$\big[d_{(n_1,\dots,n_r)}^{(r)}(\lambda)-c_\lambda(\nu)\big] f_{r,\nu}(n_1,\dots,n_r)=0$}.
The recursion~\eqref{rec-alpha} follows, taking
$n_1,n_2,\dots$ as in~\eqref{n1n2}, $\lambda=n$, $q_i= L_{n_i}$
and using~\eqref{id}.
\end{proof}

\subsection{Binomial sum identities}\label{bs}

In the case $\nu=0$, Theorem~\ref{ff} may be reformulated as a~series of
binomial sum identities.
Recall that the normalised coefficients $A_r(n)=N_r(n)a_r(n)$ are given
by the binomial sums~\eqref{Ar-bs}.
For small values of $r$, these sums may be simplified.
For $r=2$, by Vandermonde's identity,
\[A_2(n,m)=\sum_k {n \choose k}{m \choose k} = {n+m\choose n}.\]
For $r=3$, using
\[{m \choose i} {i\choose k} = {m\choose k} {m-k\choose m-i},\]
and Vandermonde's identity (twice), we obtain
\begin{align}\label{A3}
A_3(n,m,l) &= \sum_{i,j,k} {n \choose i}{m \choose i}{i\choose k}
{l \choose j} {m \choose j} {j\choose k} = \sum_{i,j,k} {m \choose k}^2 {n \choose i} {m-k\choose m-i} {l \choose j} {m-k\choose m-j} \nonumber\\
&=\sum_k {m \choose k}^2 {n+m-k \choose m} {m+l-k \choose m}=\sum_k {m \choose k}^2 {n+k \choose m} {l+k \choose m}.
\end{align}
We note that this is equivalent to
\[A_3(n,m,l)= {n+m\choose n} {m+l\choose l} {}_4 F_3\left[
\! \begin{array}{c} -n,-m,-m,-l \\ 1,-n-m,-m-l \end{array} \middle| 1\right],\]
where $~_p F_q$ denotes the generalised hypergeometric series
\[~_p F_q\left[
\! \begin{array}{c} a_1,\dots,a_p \\ b_1,\dots,b_q \end{array} \middle| z\right]=
\sum_{k=0}^\infty \frac{(a_1)_k\cdots (a_p)_k}{(b_1)_k\cdots (b_q)_k} \frac{z^k}{k!},\qquad (x)_n=\frac{\Gamma(x+n)}{\Gamma(x)}.\]

A~similar computation yields
\begin{gather*}\label{a4}
A_4(n_1,n_2,n_3,n_4)\nonumber \\
\qquad= \sum_{i,j}
{n_2 \choose i}^2 {n_1+n_2-i \choose n_2} {n_3 \choose j}^2 {n_3+n_4-j \choose n_3}
{n_2+n_3-i-j \choose n_2-j} {i+j \choose i}.
\end{gather*}

Let $n,m\in\Z_+$ and define, for $i\ge 1$,
\[
n_i=\begin{cases} im/2, & i \mbox{ \rm even},\\ n+(i-1)m/2, & i \mbox{ \rm odd},\end{cases}\]
so that
$n_1,n_2,n_3,\ldots =n,m,n+m,2m,n+2m,3m,n+3m,4m,\ldots$.
For $r\ge 2$, define \[F_r=F_r(n,m)=A_r(n_1,\dots,n_r).\]
Define, for $k\ge 1$,
\[G_{2k-1}={km \choose m,\dots,m}=\prod_{j=1}^k {jm \choose m},\qquad G_{2k}={n+km \choose n,m,\dots,m}={n+km \choose n} G_{2k-1}.\]
In this notation, Theorem~\ref{ff} yields the following.
\begin{cor}
$F_2=G_2$ and, for $r> 2$,
$F_r=G_{r} G_{r-1}^2 \cdots G_2^2$.
\end{cor}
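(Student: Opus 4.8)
The plan is to specialise Theorem~\ref{ff} to $\nu=0$ and then repackage the resulting closed form for $a_r$ as the asserted product of $G$'s. When $\nu=0$ one has $\nu'=0$, so $f_{r,0}=a_{r,0}=a_r$, while the two products on the right of Theorem~\ref{ff} collapse to $\prod_i(n-\nu_i)!=(n!)^{r+1}$ and $\prod_{i<j}(m-\nu_i-\nu_j)!=(m!)^{\binom{r+1}{2}}$. The hypotheses of Theorem~\ref{ff} hold for free: since $n,m\in\Z_+$ each $n_i$ is a nonnegative integer, so $(n_1,\dots,n_r)\in\Z_+^r$, and $\Re(n_{r+1})=n_{r+1}\ge 0>-1$. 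Hence
$$a_r(n_1,\dots,n_r)=\frac{n_{r+1}!}{(n!)^{r+1}\,(m!)^{\binom{r+1}{2}}},$$
and multiplying by $N_r=\prod_{i=1}^r n_i!^2$ gives the closed form
$$F_r=A_r(n_1,\dots,n_r)=\frac{n_{r+1}!\,\prod_{i=1}^r n_i!^2}{(n!)^{r+1}\,(m!)^{\binom{r+1}{2}}}.$$

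The next step is to record the telescoped forms of the $G$'s. Since $\binom{jm}{m}=(jm)!/\bigl(m!\,((j-1)m)!\bigr)$, the product defining $G_{2k-1}$ telescopes to $G_{2k-1}=(km)!/(m!)^k$, whence $G_{2k}=\binom{n+km}{n}G_{2k-1}=(n+km)!/\bigl(n!\,(m!)^k\bigr)$. Together with the explicit values $n_{2k-1}=n+(k-1)m$ and $n_{2k}=km$, this reduces the corollary to a matching of factorials.

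I would then argue by induction on $r$. The base case $r=2$ is immediate from the closed form: $F_2=(n+m)!/(n!\,m!)=\binom{n+m}{n}=G_2$. For the inductive step it suffices to check the two equalities
$$\frac{F_{r+1}}{F_r}=\frac{n_{r+1}!\,n_{r+2}!}{n!\,(m!)^{r+1}}=G_{r+1}\,G_r,$$
since then $F_{r+1}=F_r\cdot G_{r+1}G_r=\bigl(G_r\prod_{j=2}^{r-1}G_j^2\bigr)G_{r+1}G_r=G_{r+1}\prod_{j=2}^{r}G_j^2$. The first equality follows directly from the closed form for $F_r$, using $\binom{r+2}{2}-\binom{r+1}{2}=r+1$ to track the power of $m!$; the second is a one-line check in each parity class of $r$, substituting the telescoped forms of $G_{r+1},G_r$ and the values of $n_{r+1},n_{r+2}$ above. (Alternatively one can avoid the induction entirely and expand $G_r\prod_{j=2}^{r-1}G_j^2$ directly via the telescoped forms, collecting the powers of $n!$ and $m!$ and comparing with the closed form; the arithmetic of the exponents uses $\sum_{k=1}^{K-1}k$ and $\sum_{k=2}^{K}k$ in the even case and their analogues in the odd case.)

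There is no serious obstacle here: Theorem~\ref{ff} already does the hard factorisation, and what remains is bookkeeping. The only points demanding care are the parity-dependent description of the sequence $n_1,n_2,\dots$ (so that $n_{r+1},n_{r+2}$, and the split of $\prod_i n_i!$ into its odd- and even-indexed parts, are correctly identified) and the empty-product convention that makes the general formula read as $F_2=G_2$ at $r=2$.
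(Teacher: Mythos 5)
Your proof is correct and takes essentially the same route as the paper: the corollary is stated there as an immediate consequence of Theorem~\ref{ff} at $\nu=0$, and your explicit bookkeeping (the closed form $F_r=n_{r+1}!\prod_i n_i!^2/\bigl((n!)^{r+1}(m!)^{r(r+1)/2}\bigr)$, the telescoped forms of the $G$'s, and the induction via $F_{r+1}/F_r=G_{r+1}G_r$) is exactly the verification the paper leaves implicit.
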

For example,
\[F_2={n+m \choose n}, \qquad F_3={n+m \choose n}^2 {2m \choose m},\qquad
F_4={n+m \choose n}^2 {n+2m \choose n} {2m \choose m}^3,\]
\[F_5={n+m \choose n}^2 {n+2m \choose n}^2 {3m \choose m} {2m \choose m}^5.\]
The above formula for $F_2$ is an immediate consequence of
Vandermonde's identity. The expression for $F_3$ may be obtained directly
using~\eqref{A3} and Chu's identity
\[\sum_k {m \choose k}^2 {n+2m-k \choose 2m} = {n+m \choose n}^2.\]
Indeed,
\begin{align*}
F_3=A_3(n,m,n+m) &= \sum_k {m \choose k}^2 {n+m-k \choose m} {n+2m-k \choose m}\\
&= \sum_k {m \choose k}^2 {n+2m-k \choose 2m} {2m \choose m} = {n+m \choose n}^2 {2m \choose m}.
\end{align*}
By~\eqref{a4}, the above formula for $F_4$ is equivalent to the identity
\begin{gather*}
\sum_{i,j}
{m \choose i}^2 {n+m-i \choose m} {n+m \choose j}^2 {n+3m-j \choose n+m}
{n+2m-i-j \choose m-j} {i+j \choose i} \\
\qquad= {n+m \choose m}^2 {n+2m \choose n} {2m \choose m}^3.
\end{gather*}

\section{Non-intersecting Brownian bridges}\label{nibb}

Let $M$ (resp.\ $H$) be the maximal height of $N$ non-intersecting reflected Brownian bridges starting and ending
at zero (resp.\ non-intersecting Brownian excursions).
It is well known~\cite{f,fms,kat,lie} that the distribution functions of $H$ and $M$ are directly related to the
partition functions of certain discrete Gaussian ensembles.
To be more precise, let
\[\Delta^C_N(x)=\prod_{1\le i\le N} x_i \prod_{1\le i<j\le N} \big(x_i^2-x_j^2\big), \qquad \Delta^D_N(x)= \prod_{1\le i<j\le N} \big(x_i^2-x_j^2\big).\]
Then
\[P(H\le h)=A_{N} h^{-2N^2-N} \sum_{x\in \Z^N} \Delta^C_N(x)^2 {\rm e}^{-\pi^2 \sum_i x_i^2/(2h^2)},\]
and
\[P(M\le h)=A'_{N} h^{-2N^2+N} \sum_{x\in (\Z-1/2)^N} \Delta^D_N(x)^2 {\rm e}^{-\pi^2 \sum_i x_i^2/(2h^2)},\]
where $A_{N}$ and $A'_{N}$ are normalisation constants.
In~\cite[Theorem 4]{kat}, it is shown, using Jacobi's theta function identity, that
the first expression may be written as
\begin{equation}\label{Hf}
P(H\le h) = B_N \sum_{\lambda\in \Z^N} \det\big[ H_{2(i+j-1)}\big(\lambda_i \sqrt{2} h\big) \big]_{i,j=1}^N {\rm e}^{- 2h^2 \sum_i \lambda_i^2},\end{equation}
where $H_n(x)$ are the Hermite polynomials and $B_N$ is a~normalisation constant.
The proof given in~\cite{kat} is easily modified to give an analogous formula for $M$
\begin{equation}\label{Mf}
P(M\le h) = B'_N \sum_{\lambda\in \Z^N} \det\big[ H_{2(i+j-2)}\big(\lambda_i\sqrt{2} h\big) \big]_{i,j=1}^N {\rm e}^{-2 h^2 \sum_i \lambda_i^2 },\end{equation}
where $B'_N$ is a~normalisation constant.
Let us define, for $\lambda\in\Z^N$ and $t\ge 0$,
\[P_{\lambda}(t) = \frac1{N!} \sum_{\sigma\in S_N} \tilde P_{\sigma \lambda}(t),\qquad Q_{\lambda}(t) = \frac1{N!} \sum_{\sigma\in S_N} \tilde Q_{\sigma \lambda}(t),\]
where
\[\tilde Q_\lambda(t) = B_N \det\big[ H_{2(i+j-1)}\big(\lambda_i \sqrt{t} \big) \big]_{i,j=1}^N,\]
and \[\tilde P_\lambda(t) = B'_N \det\big[ H_{2(i+j-2)}\big(\lambda_i \sqrt{t} \big) \big]_{i,j=1}^N. \]
Then~\eqref{Hf} and~\eqref{Mf} may be written as
\[P\big(2M^2\le t\big) = \sum_{\lambda\in \Z^N} P_{\lambda}(t) {\rm e}^{- \sum_i \lambda_i^2 t},\qquad P\big(2H^2\le t\big) = \sum_{\lambda\in \Z^N} Q_{\lambda}(t) {\rm e}^{- \sum_i \lambda_i^2 t}.\]
For $N=1$, writing $\lambda=k$, these symmetrised coefficients are given by
\[P_k(t)=(-1)^k,\qquad Q_k(t)=1-2k^2t,\]
and for $N=2$, writing $\lambda=(k,l)$, by
\begin{gather*}
P_{k,l}(t) = (-1)^{k+l}\big[ 1 - 2\big(k^2+l^2\big) t + \big(k^2-l^2\big)^2 t^2\big],\\
Q_{k,l}(t) = 1 - 4\big(k^2+l^2\big) t + 3\big(k^2+l^2\big)^2t^2 -\frac23\big(k^2+l^2\big)^3t^3+\frac43 k^2l^2\big(k^2-l^2\big)^2 t^4.
\end{gather*}
In general, for $\lambda\in\Z^N$, $P_\lambda(t)$ has degree at most $N(N-1)$ and $Q_\lambda(t)$
has degree at most $N^2$.

\section{Transition probabilities and absorption times}\label{tpat}

Let $h^r=h^{r,2}$ and recall that
\[ h^{r} = \sum_{i=1}^r L_{n_i} - P(n),\]
where
\begin{equation}\label{P}
P(n)=\sum_{i=1}^r n_i^2 - \sum_{i=1}^{r-1} n_i n_{i+1}
\end{equation}
and $L_k$ denotes the partial backward shift operator defined by $L_k f(k) = f(k-1)$,
so, for example, if $f(n)$ is a~function of $n\in\Z_+^r$ (or $\C^r$) then
\[L_{n_i}f(n)=f(n-e_i)=f(n_1,\dots,n_{i-1},n_i-1,n_{i+1},\dots,n_r).\]
We also recall the notation $D_k = L_k-I$ for the partial backward difference operator
$D_k f(k) = f(k-1) - f(k)$, so, for example, if $f(n)$ is a~function of $n\in\Z_+^r$ (or $\C^r$) then
\[D_{n_i}f(n)=f(n-e_i)-f(n).\]

Since $a_r$ is strictly positive on $\Z_+^r$ and satisfies $h^r a_r=0$, the corresponding Doob transform
\[\cL=a_r(n)^{-1} \circ h^r \circ a_r(n) = \sum_{i=1}^r \frac{a_r(n-e_i)}{a_r(n)} D_{n_i}\]
generates a~continuous time Markov chain $X_t,\ t\ge 0$ on $\Z_+^r$.
In~\cite{noc23}, it was shown that this Markov chain has a~unique entrance
law starting from $+\infty$.

Denote by $\P_n$ the law of the chain starting from
$n\in\Z_+^r$ and by $\P$ the law of chain started from~$+\infty$.
Write $\E_n$ and $\E$ for the corresponding expectations.
Denote the transition probabilities by~$p_t(n,n')=\P_n(X_t=n')$,
and set
\[F_n(t)=p_t(n,0)=\P_n(T\le t),\qquad F(t)=\lim_{n\to\infty} p_t(n,0)=\P(T\le t),\]
where
$T=\inf\{t\ge 0\mid X_t=0\}$ is the absorption
time for the chain.

For any starting value $n\in\Z_+^r$, the chain remains in the finite set
$E_n=\{m\in\Z_+^r\mid m\le n\}$.
Thus, for each $n'\in\Z_+^r$, $p_t(n,n')$ is the unique solution to the backward equation
\[ \frac{\rm d}{{\rm d}t} p_t(n,n') = \cL_n p_t(n,n'),\qquad p_{0}(n,n')=\delta_{nn'}. \]
In particular, $F_n(t)$ is the unique solution to
$F_n'(t) = \cL_n F_n(t)$, $ F_n(0)=\delta_{n0}$.
Similarly, for each~${n\in\Z_+^r}$, $p_t(n,n')$ is the unique solution to the forward equation
\[ \frac{\rm d}{{\rm d}t} p_t(n,n') = \cL^*_{n'} p_t(n,n'),\qquad p_{0}(n,n')=\delta_{nn'}, \]
where $\cL^*$ denotes the formal adjoint of $\cL$.

For $s\ge 0$, the generating function
\[ G_n(s)=\E_n {\rm e}^{-s T} = \int_0^\infty s {\rm e}^{-s t} F_n(t) {\rm d}t\]
satisfies the recursion
$\cL_n G_n(s)=s G_n(s)$, that is,
\begin{equation}\label{g-rec}
G_n(s)=\frac1{s+P(n)} \sum_{i=1}^r \frac{a_r(n-e_i)}{a_r(n)} G_{n-e_i}(s),
\end{equation}
with $G_0(s)=1$ and the convention $G_n(s)=0$ for $n\notin\Z_+^r$.
Denote $G(s)=\E {\rm e}^{-s T}$.

\subsection[The case r=1]{The case $\boldsymbol{r=1}$}\label{r1}

When $r=1$, we have $\cL=n^2 D_n$. The absorption time starting from $n$ has the same law as the random variable
\begin{equation}\label{sn} S_n=\sum_{k=1}^n \frac{e_k}{k^2},\end{equation}
where $e_1,e_2,\dots$ is a~sequence of independent standard exponential random variables.
This gives the formulas
\smash{$G_n(s) = \prod_{k=1}^n \frac{k^2}{s+k^2}$}, \smash{$ G(s) = \frac{\pi\sqrt{s}}{\sinh \pi\sqrt{s}}$},
and \begin{equation}\label{at1} F(t)=\sum_{k\in\Z} (-1)^k {\rm e}^{-k^2 t}.\end{equation}

Let
\[\varphi_k(n)= \frac{n!^2}{(n-k)!(n+k)!},\qquad \varphi^*_k(m) = (-1)^k \frac{(-k)_m(k)_m}{m!^2},\]
where $(x)_n=\Gamma(x+n)/\Gamma(x)$.
One can easily check that $\cL \varphi_k = -k^2 \varphi_k$ and
\begin{equation}\label{or} \sum_{k\in\Z} \varphi_k(n) \varphi^*_k(m) =\delta_{nm}.\end{equation}
It also holds that $\cL^* \varphi^*_k = -k^2 \tilde\varphi_k$ and, moreover,
\begin{equation}\label{cp} \sum_{n=0}^\infty \varphi_k(n) \varphi^*_l(n) = \frac{\delta_{kl}+\delta_{k,-l}}{2}.\end{equation}
The transition probabilities therefore given by
\begin{equation}\label{kmg1}
p_t(n,m)=\sum_{k\in\Z} \varphi_k(n) \varphi^*_k(m) {\rm e}^{-k^2 t}.
\end{equation}
Note that, since $\varphi_k(n)=0$ for $|k|>n$ and $\varphi^*_k(m) =0$ for $|k|<m$,
the sums in~\eqref{or}, \eqref{cp} and~\eqref{kmg1} are all terminating.
Setting $m=0$ in~\eqref{kmg1} yields
\[F_n(t) = \sum_{k=-n}^n (-1)^k \varphi_k(n) {\rm e}^{-k^2 t}\]
and, letting $n\to\infty$, we recover~\eqref{at1}.

\subsection{The general case}

To formulate a~spectral expansion for the transition probabilities in the general case,
we need to make some assumptions. For $\nu\in\C^{r+1}_0$, recall the notation $\nu'\in\C^r$
defined by $\nu'_i=\nu_1+\cdots+\nu_i$, $i=1,\dots,r$. For $\nu\in\C^{r+1}_0$ and $n\in\nu'+\Z_+^r$,
let $ f_{r,\nu}(n)$ be defined by~\eqref{def-fr}.
Recall that
\begin{gather}
 f_{1,\nu}(n) =\frac1{\Gamma(n-\nu_1+1)\Gamma(n-\nu_2+1)},\nonumber\\
f_{2,\nu}(n,m)=\frac{\Gamma(n+m+1)}{\prod_{i=1}^3 \Gamma(n-\nu_i+1)\Gamma(m+\nu_i+1)}.\label{f12}
\end{gather}
By Theorem~\ref{jef-f}, setting $\lambda_\nu=e_2(\nu)$, these satisfy
\begin{equation}\label{ee-fr} h^r f_{r,\nu} = \lambda_\nu f_{r,\nu}.\end{equation}
The formulas~\eqref{f12}
may be used to define meromorphic extensions of $f_{1,\nu}(n)$ and $f_{2,\nu}(n,m)$
to complex values of $n$ and $m$, and it is easily checked that these extensions satisfy the eigenvalue equation~\eqref{ee-fr}. By Theorem~\ref{ff}, if
\[ n_i=\begin{cases}
 in_2/2, & i \mbox{ \rm even},\\ n_1+(i-1)n_2/2, & i \mbox{ \rm odd},\end{cases}\]
for $i\ge 1$, that is,
\begin{equation}\label{spec}
 n_1,n_2,n_3,\ldots =n_1,n_2,n_1+n_2,2n_2,n_1+2n_2,3n_2,\ldots \end{equation}
then for $\nu\in\C_0^{r+1}$, if $(n_1,\dots,n_r)\in \nu'+\Z_+^{r}$ and $\operatorname{Re}(n_{r+1})>-1$, we have
\begin{equation}\label{fff} f_{r,\nu}(n_1,\dots,n_r) = \Gamma(n_{r+1}+1) \prod_i \frac1{\Gamma(n_1-\nu_i+1) }\prod_{i<j} \frac1{\Gamma(n_2-\nu_i-\nu_j+1)}.\end{equation}
In general, we will assume that $f_{r,\nu}(n)$ extends to a~meromorphic function of $n\in\C^r$
which satisfies both the eigenvalue equation~\eqref{ee-fr} and the factorisation property~\eqref{fff}.
Given this, we also extend the definition of $a_r(n)=f_{r,0}(n)$.

For $\nu\in\C^{r+1}_0$ and $n\in\C^r$, let
$\beta_{r,\nu}(n) = a_r(n)^{-1} f_{r,\nu}(n)$, $ \beta^*_{r,\nu}(n) = h(\nu) a_r(n) M_{r,-\nu}(n)$,
where \[h(\nu)=\prod_{1\le i<j\le r+1}(\nu_i-\nu_j).\]
These satisfy $\cL\beta_{r,\nu}=\lambda_\nu\beta_{r,\nu}$ and, by~\eqref{jepm},
$\cL^*\beta^*_{r,\nu}=\lambda_\nu\beta^*_{r,\nu}$.

Note that for $\nu\in\C^{r+1}_0$,
\[\lambda_\nu\equiv e_2(\nu)=-\frac12 \sum_{i=1}^{r+1}\nu_i^2=-P(\nu'),\]
where $P$ is defined by~\eqref{P}. For $\nu\in\C_{0}^{r+1}$, $x\in\C^r$ and $n\in\Z_+^r$, define
\[C_{\nu'}(x,n,t)=\beta_{r,\nu}(x+n) \beta^*_{r,\nu}(x) {\rm e}^{-P(\nu') t}.\]

Our proposed formula for the transition probabilities is, for $n,n'\in\Z_+^r$,
\begin{equation}\label{pf} p_t(n'+n,n')=\sum_{p=0}^\infty c_p(n',n,t) {\rm e}^{-pt},\end{equation}
where
\[ c_p(n',n,t) = \lim_{x\to n'}\lim_{u\to x} \prod_{i=1}^r(x_i-u_i)
\sum_{\kappa'\in\Z_+^r\colon P(\kappa')=p} C_{u+\kappa'}(x,n,t) {\rm e}^{p t}.\]

When $r=1$, this reduces to~\eqref{kmg1}. When $r=2$, it is also correct, as will be outlined in the next section.
In general, by construction, assuming it is well defined, this expression should satisfy the backward
and forward equations. The boundary condition $p_{0}(n'+n,n')=\delta_{n0}$ might be difficult to
establish in general, but can in principle be verified on a~case by case basis for $n$ and $n'$ of the form~\eqref{spec},
using the factorisations~\eqref{fff} and~\eqref{mf}.

For $n'$ and $n$ of the form~\eqref{spec}, we can also take $x$ to be of the form~\eqref{spec}.
In this case, using the factorisations~\eqref{fff} and~\eqref{mf}, the formula~\eqref{pf} becomes
more explicit and we state it here as a~conjecture.

\begin{conj}
\begin{equation}\label{atr-nn}
p_t(n'+n,n')=\sum_{p=0}^\infty \tilde c_p(n_1',n_2',n_1,n_2,t) {\rm e}^{-pt},\end{equation}
where
\begin{gather*}
 \tilde c_p(n_1',n_2',n_1,n_2,t)=
 \lim_{(x_1,x_2)\to (n_1',n_2')}\lim_{u\to x} \prod_{i=1}^r(x_i-u_i)
\sum_{\kappa'\in\Z_+^r\colon P(\kappa')=p} \tilde C_{u+\kappa'}(x_1,x_2,n_1,n_2,t) {\rm e}^{pt},\\
\tilde C_{\nu'}(x_1,x_2,n_1,n_2,t) = h(\nu) (1+x_1)_{n_1}^{r+1} (1+x_2)_{n_2}^{r(r+1)/2} x_{r+1} \\
\phantom{\tilde C_{\nu'}(x_1,x_2,n_1,n_2,t) =}{} \times \prod_i \frac1{(x_1-\nu_i)_{n_1+1}} \prod_{ i<j}\frac1{(x_2-\nu_i-\nu_j)_{n_2+1}} {\rm e}^{-P(\nu') t}.
\end{gather*}
\end{conj}

In the case $n'=0$, taking $x_1=x_2=\xi$, we can write this as follows.

\begin{conj} For $n\in\Z_+^r$ of the form~\eqref{spec},
\begin{equation}\label{atr-n}
F_n(t)=\sum_{p=0}^\infty \tilde c_p(n_1,n_2,t) {\rm e}^{-pt},\end{equation}
where, writing $x=(x_1,x_2,\dots,x_r)=(\xi,\xi,2\xi,2\xi,\dots)$,
\begin{gather*}
 \tilde c_p(n_1,n_2,t)=
 \lim_{\xi\to 0}\lim_{u\to x} \prod_{i=1}^r(x_i-u_i)
\sum_{\kappa'\in\Z_+^r\colon P(\kappa')=p} \tilde C_{u+\kappa'}(\xi,n_1,n_2,t) {\rm e}^{pt},\\
\tilde C_{\nu'}(\xi,n_1,n_2,t)= h(\nu) n_1!^{r+1} n_2!^{r(r+1)/2} x_{r+1}
\prod_i \frac1{(\xi-\nu_i)_{n_1+1}} \prod_{ i<j}\frac1{(\xi-\nu_i-\nu_j)_{n_2+1}} {\rm e}^{-P(\nu') t}.
\end{gather*}
\end{conj}

Now, since $x+n$ also has the form~\eqref{spec}, it follows from~\eqref{fff} that $\beta_{r,\nu}(x+n)\to 1$
when $n_1,n_2\to \infty$. Given that there is a~unique entrance law from $+\infty$, we can therefore let
$n_1,n_2\to \infty$ to obtain the following.
\begin{conj}
\begin{equation}\label{atr}
F(t)=\sum_{p=0}^\infty c_p(t) {\rm e}^{-pt},\end{equation}
where, writing $x=(x_1,x_2,\dots,x_r)=(\xi,\xi,2\xi,2\xi,\dots)$,
\begin{gather*}
c_p(t)= \lim_{\xi\to 0}\lim_{u\to x} \prod_{i=1}^r(x_i-u_i)
\sum_{\kappa'\in\Z_+^r\colon P(\kappa')=p} C_{u+\kappa'}(\xi,t) {\rm e}^{pt},\\
C_{\nu'}(\xi,t)= h(\nu) \Gamma(x_{r+1})^{-1} \prod_i\Gamma(\xi-\nu_i) \prod_{ i<j}\Gamma(\xi-\nu_i-\nu_j) {\rm e}^{-P(\nu') t}.
\end{gather*}
\end{conj}

For computations, it will be convenient to `split' the coefficients $c_p(t)$ as follows.
For $\kappa'\in\Z_+^r$, denote
$\kappa=\big(\kappa_1',\kappa_2'-\kappa_1',\dots,\kappa_r'-\kappa_{r-1}',-\kappa_r'\big)$, $ \kappa^\dagger=(\kappa_{r+1},\kappa_r,\dots,\kappa_1)$.
Define an equivalence relation on $\Z_+^r$ by $\rho'\sim\kappa'$ if $\rho=\sigma\kappa$ or $\rho=\sigma\kappa^\dagger$
for some $\sigma\in S_{r+1}$ and denote the corresponding equivalence classes by
$S_{\kappa'}=\left\{\rho'\in\Z_+^r\mid \rho'\sim\kappa'\right\}$.
Note that if $\rho'\sim\kappa'$ then $P(\rho')=P\big(\kappa'\big)$.
The formula~\eqref{atr} may be written as
\begin{equation}\label{atr-ck} F(t)=\sum_{\kappa'\in \Z_+^r/{\sim}} c_{\kappa'}(t) {\rm e}^{-P(\kappa')t},\end{equation}
where, writing $x=(x_1,x_2,\dots,x_r)=(\xi,\xi,2\xi,2\xi,\dots)$,
\[c_{\kappa'}(t)= \lim_{\xi\to 0}\lim_{u\to x} \prod_{i=1}^r (x_i-u_i)
\sum_{\rho' \in S_{\kappa'}} C_{u+\rho'}(\xi,t) {\rm e}^{P(\rho')t}.\]
Note that the coefficients $c_p(t)$ are then given as
\[c_p(t)=\sum_{\kappa'\in \Z_+^r/{\sim}\colon P(\kappa')=p} c_{\kappa'}(t).\]

\subsection[The case r=2]{The case $\boldsymbol{r=2}$}

Writing $(n_1,n_2)=(n,m)$, the generator is given by
\[\cL = \frac{n^3}{n+m} D_n + \frac{m^3}{n+m} D_m.\]
In~\cite[Proposition 4.4]{noc23}, it was shown that the absorption time starting from $(n,m)$ has the same law as
$S_n+S'_m$, where $S_n$ is defined by~\eqref{sn} and $S_m'$ is an independent copy of $S_m$.
This gives the formulas
\[G_{n,m}(s) = \prod_{k=1}^n \frac{k^2}{s+k^2} \prod_{l=1}^m \frac{l^2}{s+l^2},\qquad
G(s) = \left(\frac{\pi\sqrt{s}}{\sinh \pi\sqrt{s}}\right)^2,\]
and
\begin{equation}\label{at2}
F(t)= \sum_{k\in\Z} \big(1-2k^2t\big) {\rm e}^{-k^2 t}.
\end{equation}

Writing $(x_1,x_2)=(x,y)$, the formula~\eqref{atr-nn} is correct and may be written as follows.

\begin{thm}
\begin{align}
p_t((n'+n,m'+m),(n',m')) ={} &
\frac{(n'+n)!^3 (m'+m)!^3 }{n'!^3 m'!^3} \nonumber \\
&\times \lim_{(x,y)\to (n',m')} \sum_{k=0}^n \sum_{l=0}^m
T^{n,m}_{k,l}(x,y) {\rm e}^{-P(x+k,y+l)t},\label{pf2}
\end{align}
where $P$ is defined by~\eqref{P} and
\begin{align*}
T^{n,m}_{k,l}(x,y)={}
& \frac{x+y}{x+y+k+l} \prod_{\substack{a=0\\ a\ne k}}^{n} \frac{1}{(a-k)(2x-y+a+k-l)(x+y+a+l)} \\
&\times \prod_{\substack{b=0\\ b\ne l}}^{m} \frac1{(b-l)(2y-x+b+l-k)(x+y+b+k)},
\end{align*}
with the convention that empty products are equal to one.
\end{thm}
\begin{proof}
The formula~\eqref{pf2} is clearly equal to one when $t=0$ and $(n,m)= (0,0)$.
One can also show that it vanishes at $t=0$ when $(n,m)\ne (0,0)$.
In fact, if $(n,m)\ne (0,0)$,
\[\sum_{k=0}^n \sum_{l=0}^m T^{n,m}_{k,l}(x,y) =0.\]
This follows from the more general identity
\begin{gather}
\sum_{k=0}^n \sum_{l=0}^m
\frac1{\lambda_k+\mu_l}
 \prod_{\substack{a=0\\ a\ne k}}^{n} \frac{1}{(\lambda_a-\lambda_k)(\lambda_a+\lambda_k-\mu_l)(\lambda_a+\mu_l)} \nonumber \\
\qquad \times \prod_{\substack{b=0\\ b\ne l}}^{m} \frac1{(\mu_b-\mu_l)(\mu_b+\mu_l-\lambda_k)(\mu_b+\lambda_k)} =0,\label{identity}
\end{gather}
with the convention that empty products are equal to one and the assumption that
each term in the sum is finite. A~proof of~\eqref{identity} is given in Appendix~\ref{AA}.
\end{proof}

For small values of $(n',m')$ and $(n,m)$ this is easily computed using \textsc{Mathematica}.
For example, and writing $q={\rm e}^{-t}$, it yields
\begin{gather*}
p_t((2,2),(1,1))=\frac1{9}\big( 16 q- 48q^3+32 q^4\big),\\
p_t((2,2),(1,0))=\frac8{27}\bigl( -2q+2q^4-3( q+ q^4) \ln q\bigr),\\
p_t((2,2),(0,0))=\frac1{27}\big( 27-16 q - 11q^4+12 (4 q+q^4) \ln q\big).\end{gather*}

In the case $n'=m'=0$, the formula~\eqref{pf2} is easily computed for all values of $(n,m)$
using \textsc{Mathematica}.
Only the terms corresponding to $(k,l)$ of the form $(k,k)$, $(k,0)$ and $(0,k)$ contribute,
and the formula yields
\begin{equation}\label{ans2} F_{n,m} ( t) = \sum_{k\in\Z} \big[ \tilde H_k(n,m)- 2k^2 H_k(n,m) t \big] {\rm e}^{-k^2 t},\end{equation}
where
\[H_k(n,m)=\frac{n!^2m!^2}{(n-k)!(n+k)!(m-k)!(m+k)!},\]
and $\tilde H_k(n,m)$ is defined as follows. If $|k|\le \min\{n, m\}$, then
\[\tilde H_k(n,m)=\left[ 1+k(H_{n-k}-H_{n+k}+H_{m-k}-H_{m+k}) \right] H_k(n,m),\]
where $H_n$ are the harmonic numbers
$H_n=\sum_{j=1}^n \frac1{j}$,
with the convention $H_0=0$. If $n\ge k > m$, then
\[\tilde H_k(n,m)=(-1)^{m+k} k \Gamma(k-m) \frac{n!^2m!^2}{(n-k)!(n+k)!(m+k)!}.\]
If $|k|>n\vee m$, then $\tilde H_k(n,m)=0$. The remaining cases are defined by
\[\tilde H_{-k}(n,m)=\tilde H_k(n,m)=\tilde H_k(m,n).\]
We remark that the formula~\eqref{ans2} may also be verified directly by showing that
\[\cL H_k = -k^2 H_k, \qquad \cL \tilde H_k = -k^2 \tilde H_k - 2k^2 H_k,\]
and
\[\sum_{k\in\Z} \tilde H_k(n,m) = \delta_{n0}\delta_{m0}.\]

Now let us consider the formula~\eqref{atr}. In this case it may be written as
\begin{equation}\label{F2} F(t)=\sum_{p=0}^\infty c_p(t) {\rm e}^{-pt},\end{equation}
where
\begin{gather*}
c_p(t)= \lim_{x\to 0} \sum_{(k,l)\in\Z_+^2\colon P(k,l)=p} T_{k,l}(x) {\rm e}^{pt},
\qquad
T_{k,l}(x)= \lim_{u\to x} (x-u)^2 C_{u+k,u+l}(x,t),\\
C_{\nu'}(x,t)= h(\nu) \Gamma(2x)^{-1} \prod_i\Gamma(x-\nu_i) \Gamma(x+\nu_i) {\rm e}^{-P(\nu') t}.\end{gather*}

Let $q={\rm e}^{-t}$. We first note that
\[T_{0,0}(x)=2x^3 \Gamma (x)^2 \Gamma (2 x) q^{x^2},\qquad
c_0(t)=\lim_{x\to 0} T_{0,0}(x)=1.\]
Next consider $(k,l)$ of the form $(k,k)$, $(k,0)$ and $(0,k)$, where $k>0$.
We compute
\begin{gather*}
T_{k,k}(x)=\frac{2 (k+x)^3 \Gamma (x)^2 \Gamma (k+2 x)^2}{(k!)^2\Gamma(2x)} q^{(k+x)^2},\\
T_{k,0}(x)=T_{0,k}(x)= \frac{(-1)^k (2 k+x) \Gamma (2 x) \Gamma (-k+x+1) \Gamma (k+x) \Gamma
 (k+2 x+1)}{k!\Gamma(2x)} q^{k^2+k x+x^2}.\end{gather*}
Combining these gives
\begin{align*}
S_k(x) :={}& T_{k,k}(x)+T_{k,0}(x)+T_{0,k}(x) \\
={}& \frac{2 \Gamma (k+2 x) }{k!^2\Gamma(2x)} q^{k^2+k x+x^2}
\big[ (-1)^k k! (2 k+x) (k+2 x) \Gamma (2 x)
 \Gamma (-k+x+1) \Gamma (k+x) \\
 & +(k+x)^3 \Gamma (x)^2 \Gamma (k+2x) q^{kx } \big].
\end{align*}
Letting $x\to 0$, this gives
\[c_{k^2}(t)=\lim_{x\to 0} S_k(x) = 2\big(1+2k^2\ln q\big)q^{k^2}=2\big(1-2k^2t\big){\rm e}^{-k^2 t}.\]

Now suppose $k>l>0$. Then
\begin{align*}
T_{k,l}(x)={}& \frac{ (-1)^{k+l}}{k! l! \Gamma(2x)} (2 k-l+x) (-k+2 l+x) (k+l+2 x) \\
&\times \Gamma (k+2 x) \Gamma (l+2 x) \Gamma (k-l+x) \Gamma (-k+l+x) q^{-(k+x)
 (l+x)+(k+x)^2+(l+x)^2},
\end{align*}
and
\[T_{k,l}:=\lim_{x\to 0} T_{k,l}(x)=-\frac{4 (k-2 l) (2 k-l) (k+l) }{k l (k-l)} q^{k^2-k l+l^2}.\]
Now, if $k>l>0$ then $k>k-l>0$ and, moreover,
$T_{k,l}=-T_{k,k-l}$.
It follows that the combined contribution of such terms is zero.
Essentially the same calculation shows that the
combined contribution of terms corresponding to $(k,l)$ with $l>k>0$ is also zero.
The formula~\eqref{F2} therefore yields
\[F(t)=1+2\sum_{k=1}^\infty \big(1-2k^2t\big){\rm e}^{-k^2 t} = \sum_{k\in\Z} \big(1-2k^2t\big){\rm e}^{-k^2 t},\]
in agreement with~\eqref{at2}. We note that the above computation is also easily verified
using \textsc{Mathematica}.

\subsection[The case r=3]{The case $\boldsymbol{r=3}$}

The generating function $G_n(s)$ may be computed for small values of $n$ using the recursion~\eqref{g-rec}.
For example, using \textsc{Maple} we obtain (note that $G_{nmp}=G_{pmn}$)
\begin{gather*}
G_{100}(s)=G_{010}(s)=\frac1{s+1},\qquad
G_{011}(s)= \frac1{(s+1)^2},\qquad G_{101}(s)=\frac2{(s+1)(s+2)},\\
G_{111}(s)=\frac{2(3s+5)}{5(s+1)^3(s+2)},\qquad G_{120}(s)=G_{210}(s)=\frac{4}{(s+1)^2(s+4)},\\
G_{130}(s)=\frac{36}{(s+1)^2(s+4)(s+9)},\qquad
G_{121}(s)=\frac{16(3s^2+13s+13)}{13(s+1)^3(s+2)^2(s+4)},\\
G_{112}(s)=\frac{2(3s+5)}{(s+1)^3(s+2)(s+5)},\qquad
G_{123}(s)=\frac {32(5 {s}^{3}+45 {s}^{2}+122 s+100)}{ (s+1)^3(s+2)^2(s+4)^2(s+5)(s+10)},\\
G_{213}(s)=
\frac{32(15s^3 + 180s^2 + 647s + 650)}{(s + 1)^3(s + 2)(s + 4)(s + 5)^2(s + 8)(s + 13)},\\
G_{224}(s)=\frac{512(35s^6 + 1120s^5 + 13895s^4 + 85550s^3 + 274788s^2 + 433320s + 260000)}
{(s + 20)(s + 8)(s + 10)(s + 5)^2(s + 2)^2(s + 13)(s + 4)^3(s + 1)^3}.\end{gather*}

Since $G_n(s)/s$ is the Laplace transform of $F_n(t)$, these can be inverted (again using \textsc{Maple})
to obtain, writing $q={\rm e}^{-t}$,
\begin{gather}
F_{100}(t)=F_{010}(t)=1-q,\qquad
F_{011}(t)= 1+(\ln q-1)q,\qquad F_{101}(t)=1-2q+q^2,\label{f2}\\
F_{111}(t)=\frac15\big(5+\big(-4+6\ln q -2 ( \ln q )^2\big)q-q^2 \big),\nonumber \\
F_{120}(t)=F_{210}(t)=\frac19\big( 9 +(-8+12\ln q ) q -{q}^{4} \big),\nonumber \\
F_{130}(t)=\frac1{80}\big( 80+(-65+120\ln q) q -16 {q}^{4} +{q}^{9} \big),\nonumber \\
F_{121}(t)=\frac1{39}\big( 39 + \big(-24(\ln q)^2 + 48\ln q - 16\big) q + (-24 + 12\ln q)q^2 + q^4 \big),\nonumber \\
F_{112}(t)=\frac1{48}\big( 48 + \big(-24(\ln q)^2 + 60\ln q - 33\big) q - 16 q^2 + q^5 \big),\label{f112}
\\
F_{123}(t)=\frac1{1620} \big( 1620 + \big(-1440(\ln q)^2 + 1680\ln q - 220\big) q + (-1665 + 1080\ln q)q^2 \nonumber \\
\phantom{F_{123}(t)=}{} + (300 - 240\ln q)q^4 - 36q^5 + q^{10} \big),\label{f123}\\
F_{213}(t)= \frac1{2160}
\big( 2160 + \big(-1440(\ln q)^2 + 2640\ln q - 1160\big) q - 1280q^2\nonumber\\
\phantom{F_{213}(t)=}{} + 160 q^4 + (135 - 120\ln q) q^5 - 16 q^8 + q^{13} \big),\label{f213}
\\
F_{224}(t)= \frac1{680400}
\big( 680400 + \big(-806400(\ln q)^2 + 403200 \ln q - 22400\big) q \nonumber\\
\phantom{F_{224}(t)=}{} + (-974400 + 806400\ln q)q^2 + \big(201600(\ln q)^2 - 529200\ln q + 333725\big) q^4 \nonumber\\
\phantom{F_{224}(t)=}{} + (40320\ln q - 18480)q^5 + 1680q^8 - 448q^{10} - 80q^{13}+3q^{20} \big).\label{f224}
\end{gather}

For the cases where $p=n+m$, the formula~\eqref{atr-n} may be computed using \textsc{Mathematica}
and is found to agree with~\eqref{f2}, \eqref{f112}, \eqref{f123}, \eqref{f213} and~\eqref{f224},
and in fact we have similarly verified that it is correct for all $n,m\le 4$. We have also verified,
using \textsc{Mathematica}, that the formula~\eqref{atr-n} satisfies the boundary condition
$F_{n,m,n+m}(0)=0$ for all $n,m\le 10$ with $(n,m)\ne (0,0)$.

We note that the above formulas are consistent with the ansatz (cf.\ \eqref{ans2})
\begin{equation}\label{ans3}
F_n(t)=\sum_{k,l\in\Z} P^n_{k,l}(t) {\rm e}^{-(k^2+l^2)t},
\end{equation}
where $P^n_{k,l}(t)$ are polynomials of degree at most 2 (depending on $k$, $l$ only through $|k|$, $|l|$)
which satisfy
\[\frac{\rm d}{{\rm d}t} P^n_{k,l}(t) = \big(\cL_n+k^2+l^2\big) P^n_{k,l}(t),\qquad \sum_{k,l\in\Z} P^n_{k,l}(0)=\delta_{n0}. \]

Now let us consider the formula~\eqref{atr}. For any given $p$, the coefficient $c_p(t)$ is easily
computed using \textsc{Mathematica}. The first few coefficients are given by
\begin{gather*}
c_0(t)=1,\qquad c_1(t)=-4(1-t)^2,\qquad c_2(t)=4 (1-4t),\\
c_3(t)=0, \qquad c_4(t)=4(1-4t)^2,\qquad c_5(t)=-8(1-t)(1-9t),\qquad c_6(t)=0.\end{gather*}
The formula~\eqref{atr-ck} may be written as
\[F(t)=\sum_{\kappa'\in \Z_+^3/\sim} c_{\kappa'}(t) {\rm e}^{-P(\kappa')t},\]
where, writing $x=(x_1,x_2,x_3)=(x,x,2x)$,
\begin{gather}
c_{\kappa'}(t)= \lim_{x\to 0}\lim_{u\to x} \prod_{i=1}^3 (x_i-u_i)
\sum_{\rho' \in S_{\kappa'}} C_{u+\rho'}(x,t) {\rm e}^{P(\rho')t},\nonumber\\
C_{\nu'}(x,t)= h(\nu) \Gamma(2x)^{-1} \prod_{i=1}^4 \Gamma(x-\nu_i) \prod_{1\le i<j\le 4}\Gamma(x-\nu_i-\nu_j) {\rm e}^{-P(\nu') t}.\label{ck3}
\end{gather}

In the following, let $\kappa'=(k,m,l)\in\Z_+^3$ and consider different cases. In each case, the coefficient $c_{\kappa'}(t)$
defined by~\eqref{ck3} is computed using \textsc{Mathematica}.

{\em Case $(i)$:} $k>l>0$ and $m>k+l$.
Then $S_{\kappa'}$ consists of the six distinct elements
\[\begin{array}{|c|c|}
\hline
\rho' & \rho \\ \hline
(k,m,l) & (k,m-k,l-m,-l) \\
(k,m,m-l) & (k,m-k,-l,l-m) \\
(k,k-l,m-l) & (k,-l,m-k,l-m) \\ \hdashline
(l,m,k) & (l,m-l,k-m,-k) \\
(m-l,m,k) & (m-l,l,k-m,-k) \\
(m-l,k-l,k) & (m-l,k-m,l,-k) \\
\hline
\end{array}
\]
and additionally, if $m\ne 2k$,
\[\begin{array}{|c|c|} \hline
\rho' & \rho \\ \hline
(m-k,m,l) & (m-k,k,l-m,-l) \\
(m-k,m,m-l) & (m-k,k,-l,l-m) \\
(m-k,m-k-l,m-l) & (m-k,-l,k,l-m) \\ \hdashline
(l,m,m-k) & (l,m-l,-k,k-m) \\
(m-l,m,m-k) & (m-l,l,-k,k-m) \\
(m-l,m-k-l,m-k) & (m-l,-k,l,k-m) \\
\hline
\end{array}
\]
Either way we find that $c_{\kappa'}(t)=0$.

{\em Case $(ii)$:} $k>l=0$ and $m>k$.
Then $S_{\kappa'}$ consists of
\[\begin{array}{|c|c|}
\hline
\rho' & \rho \\ \hline
(k,m,0) & (k,m-k,-m,0) \\[-1.0pt]
(k,m,m) & (k,m-k,0,-m) \\[-1.0pt]
(k,k,m) & (k,0,m-k,-m) \\[-1.0pt]
(0,k,m) & (0,k,m-k,-m) \\[-1.0pt]
\hdashline
(0,m,k) & (0,m,k-m,-k) \\[-1.0pt]
(m,m,k) & (m,0,k-m,-k) \\[-1.0pt]
(m,k,k) & (m,k-m,0,-k) \\[-1.0pt]
(m,k,0) & (m,k-m,-k,0)\\[-1.0pt]
\hline
\end{array}
\]
and additionally, if $m\ne 2k$,
\[\begin{array}{|c|c|} \hline
\rho' & \rho \\ \hline
(m-k,m,0) & (m-k,k,-m,0) \\[-1.0pt]
(m-k,m,m) & (m-k,k,0,-m) \\[-1.0pt]
(m-k,m-k,m) & (m-k,0,k,-m) \\[-1.0pt]
(0,m-k,m) & (0,m-k,k,-m) \\[-1.0pt] \hdashline
(0,m,m-k) & (0,m,-k,k-m) \\[-1.0pt]
(m,m,m-k) & (m,0,-k,k-m) \\[-1.0pt]
(m,m-k,m-k) & (m,-k,0,k-m) \\[-1.0pt]
(m,m-k,0) & (m,-k,k-m,0) \\[-1.0pt]
\hline
\end{array}
\]
Either way we find that $c_{\kappa'}(t)=0$.

{\em Case $(iii)$:} If $k>l\ge m>0$, then $(k,m,l)\sim (k-m,k+l-m,l-m)$.
The latter satisfies the conditions of Case (i) if $l>m$ and of Case (ii) if $l=m$,
so $c_{\kappa'}(t)=0$.

{\em Case $(iv)$:} If $m=k>l>0$, then $(k,k,l)\sim (l,k,0)$ which satisfies
the conditions of Case~(ii) so $c_{\kappa'}(t)=0$ in this case.

{\em Case $(v)$:}
If $k+l>m>k>l>0$, then $S_{\kappa'}$ consists of
\[\begin{array}{|c|c|}
\hline
\rho' & \rho \\ \hline
(k,m,l) & (k,m-k,l-m,-l)\\[-1.0pt]
(m-k,m,l) & (m-k,k,l-m,-l) \\[-1.0pt]
(k,k+l-m,l) & (k,l-m,m-k,-l) \\[-1.0pt] \hdashline
(l,m,k) & (l,m-l,k-m,-k)\\[-1.0pt]
(l,m,m-k) & (l,m-l,-k,k-m) \\[-1.0pt]
(l,k+l-m,k) & (l,k-m,m-l,-k)\\[-1.0pt]
\hline
\end{array}
\]
and additionally, if $m\ne 2l$,{\samepage
\[\begin{array}{|c|c|}
\hline
\rho' & \rho \\ \hline
(k,m,m-l)& (k,m-k,-l,l-m) \\[-1.0pt]
(m-k,m,m-l)& (m-k,k,-l,l-m) \\[-1.0pt]
(k,k-l,m-l)& (k,-l,m-k,l-m) \\[-1.0pt] \hdashline

(m-l,m,k)& (m-l,l,k-m,-k) \\[-1.0pt]
(m-l,m,m-k)& (m-l,l,-k,k-m) \\[-1.0pt]
(m-l,k-l,k)& (m-l,k-m,l,-k) \\[-1.0pt]
\hline
\end{array}
\]
Either way we find that $c_{\kappa'}(t)=0$.}

{\em Case $(vi)$:}
Suppose $k>m>l>0$. In this case, $S_{\kappa'}$ consists of the distinct elements among
\[\begin{array}{|c|c|}
\hline
\rho' & \rho \\ \hline
(k,m,l) & (k,m-k,l-m,-l)\\
(k,k+l-m,l) & (k,l-m,m-k,-l) \\
(k,k+l-m,k-m) & (k,l-m,-l,m-k) \\
(k,m,m-l)& (k,m-k,-l,l-m) \\
(k,k-l,m-l)& (k,-l,m-k,l-m) \\
(k,k-l,k-m)& (k,-l,l-m,m-k) \\
\hdashline

(l,m,k) & (l,m-l,k-m,-k)\\
(l,k+l-m,k) & (l,k-m,m-l,-k)\\
(k-m,k+l-m,k) & (k-m,l,m-l,-k)\\

(m-l,m,k)& (m-l,l,k-m,-k) \\
(m-l,k-l,k)& (m-l,k-m,l,-k) \\
(k-m,k-l,k)& (k-m,m-l,l,-k)\\
\hline
\end{array}
\]
There are multiplicities if $m=2l$, $m=(k+l)/2$ or $m=k-l$.
If any two of these hold then they all do and there are just two distinct elements,
namely $(3l,2l,l)$ and $(l,2l,3l)$. If just one of these holds, there are six distinct elements.
Otherwise, there are twelve. In any case, the limit~$c_{\kappa'}(t)$ is a~multiple of the
limit obtained by including all twelve, which we find to be $0$.

{\em Case $(vii)$:} $k\ge l\ge 0$ and $m=k+l$. In this case, $(k,k+l,l)\sim (k,0,l)$
which is covered by Case (ix) below.

{\em Case $(viii)$:} $k= l\ge 0$ and $m\ge 0$. If $m>k$, then $(k,m,k)\sim (m-k,0,k)$
and if $m\le k$ then~${(k,m,k)\sim (k-m,0,k)}$, both of which are covered by Case (ix) below.

{\em Case $(ix)$:} $k\ge l\ge 0$ and $m=0$.
Let us write $S_{\kappa'}=S_{k,l}$
and $c_{\kappa'}(t)=c_{k,l}(t)$. Note that in this case $P(\kappa')=k^2+l^2$.

If $k>l>0$, $S_{k,l}$ consists of the elements
$(k,0,l)$, $ (k,k+l,l)$, $ (k,k+l,k)$, $(l,0,k)$, $ (l,k+l,k)$, $ (l,k+l,l)$, $ (k,k-l,k)$.
In this case, we obtain
\[c_{k,l}(t)=8(-1)^{k+l}\big[ 1 - 2\big(k^2+l^2\big) t + \big(k^2-l^2\big)^2 t^2\big].\]
For $k>0$, $S_{k,0}$ consists of the elements
$(k,0,0)$, $ (k,k,0)$, $ (k,k,k)$, $ (0,0,k)$, $ (0,k,k)$, $ (0,k,0)$.
In this case, we obtain
$c_{k,0}(t)=4(-1)^{k}\big[ 1 - 2k^2 t + k^4 t^2\big]$.
For $k>0$, $S_{k,k}$ consists of the elements~${(k,0,k)}$, $(k,2k,k)$.
In this case, we obtain
$c_{k,k}(t)=4 \big[ 1 - 4k^2 t \big]$.
Finally, we note that~$S_{0,0}$ consists of the single element $(0,0,0)$
and in this case we have $c_{0,0}(t)=1$.

We have thus shown that
\begin{equation}\label{cp-P3} c_p(t)=\sum_{k,l\in\Z\colon k^2+l^2=p} P_{k,l}(t),\end{equation}
where
\[P_{k,l}(t) = (-1)^{k+l}\big[ 1 - 2\big(k^2+l^2\big) t + \big(k^2-l^2\big)^2 t^2\big],\]
and so the formula~\eqref{atr} yields
\[
F( t) = \sum_{k,l\in\Z} P_{k,l}(t) {\rm e}^{-(k^2+l^2)t}.
\]
This agrees with the distribution function of twice the square of the maximal height of a~pair of non-intersecting
reflected Brownian bridges, each starting and ending at zero, as discussed in Section~\ref{nibb}.

\subsection[The case r=4]{The case $\boldsymbol{r=4}$}

Again, the generating function $G_n(s)$ may be computed for small values of $n$ using the recursion~\eqref{g-rec}.
For example, using \textsc{Maple} we obtain
\begin{gather}
G_{1111}(s)=
\frac{4\big(3s^2 + 9s + 7\big)}{7(s + 1)^4(s + 2)^2},
\qquad
G_{1122}(s)=
\frac{2\big(15s^3 + 90s^2 + 167s + 100\big)}{(s + 5)^2(s + 2)^3(s + 1)^4},\label{g1122}
\end{gather}
and
\begin{equation}\label{g1234}
G_{1234}(s)=\frac{1024 J(s)}{(s + 20)(s + 8)(s + 10)^2(s + 5)^3(s + 2)^4(s + 13)(4 + s)^3(s + 1)^4},
\end{equation}
where
\begin{align*}
J(s)={}&105s^9 + 4725s^8 + 89145s^7 + 926335s^6 + 5845890s^5 + 23240520s^4\\
& + 58189896s^3 + 88421760s^2 + 73951200s + 26000000.
\end{align*}
On the other hand, using \textsc{Mathematica}, the formula~\eqref{atr-n} yields, writing $q={\rm e}^{-t}$,
\begin{gather*}
F_{1122}(t) = 1 + \frac1{48} \big(-6 + 39 \ln q - 36 (\ln q)^2 + 8 (\ln q)^3\big) q \\
 \phantom{F_{1122}(t) = }{}- \frac1{27} \big(23 - 30 \ln q + 9 (\ln q)^2\big) q^2 + \frac1{432} (-10+9\ln q) q^5,
\\
F_{1234}(t) = 1 +
 \frac{2}{729} \big(-47 - 96 \ln q - 216 (\ln q)^2 + 144 (\ln q)^3\big) q \\
 \phantom{F_{1234}(t) =}{}+ \frac1{5832} \big(-4327 + 12408 \ln q - 10080 (\ln q)^2 + 2304 (\ln q)^3\big) q^2 \\
\phantom{F_{1234}(t) =}{} - \frac1{3888} \big(993 - 1264 \ln q + 384 (\ln q)^2\big) q^4\\
\phantom{F_{1234}(t) =}{} + \frac1{729000} \big(91663 - 137160 \ln q + 50400 (\ln q)^2\big) q^5 \\
\phantom{F_{1234}(t) =}{}+ \frac{q^8}{3645} - \frac1{729000} (-203 + 240 \ln q) q^{10} - \frac{q^{13}}{204120} + \frac{q^{20}}{10206000}.
\end{gather*}
Taking Laplace transforms, we find perfect agreement with~\eqref{g1122} and~\eqref{g1234}.
We have similarly verified that the formula~\eqref{atr-n} is correct for all $n,m\le 2$, and satisfies
the boundary condition~${F_{n,m,n+m,2m}(0)=0}$ for all $n,m\le 3$ with $(n,m)\ne (0,0)$.

In all cases, we have computed, the formulas obtained are consistent with the ansatz (cf.\ \eqref{ans2} and~\eqref{ans3})
\[
F_n(t)=\sum_{k,l\in\Z} Q^n_{k,l}(t) {\rm e}^{-(k^2+l^2)t},
\]
where $Q^n_{k,l}(t)$ are polynomials of degree at most 4 (depending on $k$, $l$ only through $|k|$, $|l|$)
which satisfy
\[\frac{\rm d}{{\rm d}t} Q^n_{k,l}(t) = \big(\cL_n+k^2+l^2\big) Q^n_{k,l}(t),\qquad \sum_{k,l\in\Z} Q^n_{k,l}(0)=\delta_{n0}. \]

Now consider the formulas~\eqref{atr} and~\eqref{atr-ck}.
For $\kappa'=(k,0,0,l)$, where $k\ge l\ge 0$, denote $c_{k,l}(t)=c_{\kappa'}(t)$ and $S_{k,l}=S_{\kappa'}$.
If $k>l>0$, $S_{k,l}$ consists of 35 elements and we find, using \textsc{Mathematica},
\begin{equation}\label{ckl} c_{k,l}(t)=8Q_{k,l}(t),\end{equation}
where
\[
Q_{k,l}(t) = 1 - 4\big(k^2+l^2\big) t + 3\big(k^2+l^2\big)^2t^2
 -\frac23\big(k^2+l^2\big)^3t^3+\frac43 k^2l^2\big(k^2-l^2\big)^2 t^4.\]
If $k>0$, $S_{k,0}$ consists of 10 elements and we find that
\begin{equation}\label{ck0} c_{k,0}(t)=4Q_{k,0}(t).\end{equation}
If $k>0$, $S_{k,k}$ consists of 10 elements and we find that
\begin{equation}\label{ckk} c_{k,k}(t)=4Q_{k,k}(t).\end{equation}
Finally, we compute $c_{0,0}(t)=1$.
It appears that $c_{\kappa'}(t)=0$ for all of the other cases. Rather that go through them exhaustively
as we did in the $r=3$ case,
we note that, given~\eqref{ckl}, \eqref{ck0} and~\eqref{ckk}, this claim is equivalent to the identity
\begin{equation}\label{cp-Q} c_p(t)=\sum_{k,l\in\Z\colon k^2+l^2=p} Q_{k,l}(t),\end{equation}
which we have verified using \textsc{Mathematica} for all $p\le 12$.
Putting all this together yields the formula
\[
F( t) = \sum_{k,l\in\Z} Q_{k,l}(t) {\rm e}^{-(k^2+l^2)t},
\]
which agrees with the distribution function of twice
the square of the maximal height of a~pair of non-intersecting Brownian excursions,
as discussed in Section~\ref{nibb}.

\subsection{Higher rank}

Suppose $r=2N$ or $r=2N-1$. In the notation of Section~\ref{nibb}, for $\lambda\in\Z^N$, let $R_\lambda(t)=P_\lambda(t)$ if $r$ is odd
and $R_\lambda(t)=Q_\lambda(t)$ if $r$ is even. Note that $R_\lambda$ has degree at most $d_r$, where $d_r=(r^2-1)/4$ if $r$ is odd
and $d_r=r^2/4$ if $r$ is even.
In general, we expect the formula~\eqref{atr-n} to be consistent with the ansatz
\[
F_n(t)=\sum_{\lambda\in\Z^N} R^n_{\lambda}(t) {\rm e}^{-\sum_i \lambda_i^2t},
\]
where $R^n_{\lambda}(t)$ are polynomials of degree at most $d_r$ (depending on $\lambda$ only through $|\lambda_i|$, $i=1,\dots,N$)
which satisfy
\[\frac{\rm d}{{\rm d}t} R^n_{\lambda}(t) = \Big(\cL_n+\sum_i \lambda_i^2\Big) R^n_{\lambda}(t),\qquad \sum_{\lambda\in\Z^N} R^n_{\lambda}(0)=\delta_{n0}, \]
and $\lim_{n\to\infty}R^n_{\lambda}(t) =R_{\lambda}(t)$.

\section*{Code availability}

An open-source code repository for this work is available on GitHub~\cite{noc-gh}.

\appendix

\section{Proof of the identity~(\ref{identity})}\label{AA}

Here we give a~proof of the identity~\eqref{identity}, namely
\begin{gather*}
\sum_{k=0}^n \sum_{l=0}^m
 \frac1{\lambda_k+\mu_l}
 \prod_{\substack{a=0\\ a\ne k}}^{n} \frac{1}{(\lambda_a-\lambda_k)(\lambda_a+\lambda_k-\mu_l)(\lambda_a+\mu_l)} \\
\qquad \times \prod_{\substack{b=0\\ b\ne l}}^{m} \frac1{(\mu_b-\mu_l)(\mu_b+\mu_l-\lambda_k)(\mu_b+\lambda_k)} =0,
\end{gather*}
with the convention that empty products are equal to one and the assumption that
each term in the sum is finite.
We first note that we can write the above sum as
\begin{equation}\label{sum}
S = \sum_{l=0}^m \prod_{a=0}^n\frac1{\lambda_a+\mu_l} \prod_{\substack{b=0\\ b\ne l}}^{m} \frac1{\mu_b-\mu_l} S_l,
\end{equation}
where
\[S_l=\sum_{k=0}^n \prod_{\substack{a=0\\ a\ne k}}^{n} \frac{1}{(\lambda_a-\lambda_k)(\lambda_a+\lambda_k-\mu_l)}
\prod_{\substack{b=0\\ b\ne l}}^{m} \frac1{(\mu_b+\mu_l-\lambda_k)(\mu_b+\lambda_k)}.\]

We recall the identity, for $x_1,\dots,x_N$ distinct,
\begin{equation}\label{theta}
\sum_{i=1}^N \prod_{j\ne i} \frac1{x_j-x_i} =0.
\end{equation}
See, for example, \cite[equation (8.17)]{noc13} for a~proof.
For each $0\le l\le m$, taking $N=n+m-1$, we extend the sequence $\lambda_1,\lambda_2,\dots$
by defining $\lambda_{n+1},\dots,\lambda_N$ to be
$\mu_1+\mu_l,\dots,\mu_{l-1}+\mu_l,{\mu_{l+1}+\mu_l},\allowbreak\dots,\mu_m+\mu_l$.
Note that in this notation,
\[S_l= \sum_{k=0}^n \prod_{\substack{a=0\\ a\ne k}}^{N} \frac{1}{(\lambda_a-\lambda_k)(\lambda_a+\lambda_k-\mu_l)}.\]
Taking $x_i=\lambda_i(\lambda_i-\mu_l)$, \eqref{theta} implies that
\[\sum_{i=1}^N \prod_{j\ne i} \frac1{(\lambda_j-\lambda_i)(\lambda_j+\lambda_i-\mu_l)} =0\]
or, equivalently,
\[S_l=-\sum_{\substack{l'=0\\l'\ne l}}^m \prod_{a=0}^n\frac1{(\lambda_a-\mu_{l'}-\mu_l)(\lambda_a+\mu_{l'})}
\prod_{\substack{b=0\\ b\notin\{ l,l'\} }}^{m} \frac1{(\mu_b-\mu_{l'})(\mu_b+\mu_l+\mu_{l'})}.\]
Substituting this into~\eqref{sum} gives
\[S=\sum_{l,l'\colon l\ne l'} (\mu_{l'}-\mu_l) R_{l,l'},\]
where
\[
R_{l,l'}= \prod_{a=0}^n\frac1{(\lambda_a+\mu_l)(\lambda_a+\mu_{l'})(\lambda_a-\mu_l-\mu_{l'})} \prod_{\substack{b=0\\ b\ne l}}^{m} \frac1{\mu_b-\mu_l}
\prod_{\substack{b=0\\ b\ne l'}}^{m} \frac1{\mu_b-\mu_{l'}}
\prod_{\substack{b=0\\ b\notin\{ l,l'\} }}^{m} \frac1{\mu_b+\mu_l+\mu_{l'}}.
\]
Since $R_{l,l'}=R_{l',l}$, it follows that $S=0$, as required.

\subsection*{Acknowledgements}

The author would like to thank the anonymous referees for their careful reading and helpful comments on an earlier version of this manuscript.

\pdfbookmark[1]{References}{ref}
 \LastPageEnding

\end{document}